\newtheorem{theorem}{Theorem}[section]
\newtheorem{lemma}[theorem]{Lemma}
\newtheorem{corollary}[theorem]{Corollary}
\theoremstyle{definition}
\newtheorem{definition}[theorem]{Definition}
\theoremstyle{remark}
\numberwithin{equation}{section}
\newcommand{\authorfootnotes}{\renewcommand\thefootnote{\@fnsymbol\c@footnote}}%
\begin{document}

\title[Nonlocal-to-local convergence of the $p$-Biharmonic equation]
{Nonlocal-to-local convergence of the $p$-Biharmonic evolution equation with the Dirichlet boundary Condition}

% \begin{center}
%   \normalsize
%   \authorfootnotes
%   Author A\footnote{Author A}\textsuperscript{1}, Author B\footnote{Author B}\textsuperscript{2},
%   Author C\footnote{Author C}\textsuperscript{1}, Author D\footnote{Author D}\textsuperscript{2} and
%   Author E\footnote{Author E}\textsuperscript{2} \par \bigskip

%   \textsuperscript{1}Department of Computer Science, \LaTeX\ University \par
%   \textsuperscript{2}Department of Mechanical Engineering, \LaTeX\ University\par \bigskip

% \end{center}

% %    Remove any unused author tags.

%    author one information
\author{Kehan Shi $^{\ast}$, Yi Ran $^{\dagger}$}
\address{$^{\ast}$Department of Mathematics, China Jiliang University, Hangzhou 310018, China}
\address{$^{\dagger}$School of Mathematics, Harbin Institute of Technology, Harbin 150001, China}
% \email{kshi@cjlu.edu.cn}

\thanks{Yi Ran is the corresponding author: yi.ran@hit.edu.cn}
% \address{School of Mathematics, Harbin Institute of Technology, Harbin, 150001, China}
% \email{}

%\thanks{}

%%    author two information
%\author{}
%\address{}
%\curraddr{}
%\email{}
%\thanks{}

\subjclass[2010]{45G10,  35G20}

\keywords{$p$-biharmonic equation, Nonlocal equation, Dirichlet boundary condition, Nonlocal-to-local convergence}

\date{}

\dedicatory{}

\begin{abstract}
This paper studies the nonlocal $p$-biharmonic evolution equation with the Dirichlet boundary condition that arises in image processing and data analysis. We prove the existence and uniqueness of solutions to the nonlocal equation and discuss the large time behavior of the solution.
By appropriately rescaling the nonlocal kernel, we further show that the solution converges to the solution of the classical $p$-biharmonic equation with the Dirichlet boundary condition.
Numerical experiments are presented to demonstrate the effectiveness of the nonlocal $p$-biharmonic equation for image inpainting.
\end{abstract}

\maketitle

\section{Introduction}
In this paper, we study the nonlocal $p$-biharmonic evolution equation with the homogeneous Dirichlet boundary condition
\begin{align}\label{eq:1.1}
\left\{
  \begin{array}{ll}
  \frac{\partial u}{\partial t}=-\Delta_{NL}(|\Delta_{NL}u|^{p-2}\Delta_{NL}u), \quad (x,t)\in \Omega\times (0,T), \\
  u(x,t)=0,  \quad (x,t)\in \mathbb{R}^N\setminus \overline{\Omega}\times (0,T), \\
  u(x,0)=u_0(x),\quad x\in\Omega,
  \end{array}
\right.
\end{align}
where $\Omega\subset\mathbb{R}^N (N\geq 2)$ is a bounded domain, $T>0$, and $1<p<+\infty$.
The nonlocal Laplace operator is defined as
\begin{equation*}
  \Delta_{NL}u(x)=\int_{\mathbb{R}^N}J(x-y)\left(u(y)-u(x)\right)dy
  =\int_{\Omega_E}J(x-y)\left(u(y)-u(x)\right)dy,
\end{equation*}
and the nonlocal kernel $J$ is a nonnegative continuous radial function with compact support $\mbox{supp}(J)$. Here $\Omega_E\subset \mathbb{R}^N$ is a sufficiently large domain such that
$\{\Omega+2\mbox{supp}(J)\}\subset \Omega_E$.
Equation \eqref{eq:1.1} is the nonlocal analog of the classical $p$-biharmonic equation
\begin{align}\label{eq:1.2}
\left\{
  \begin{array}{ll}
  \frac{\partial u}{\partial t}=-\Delta(|\Delta u|^{p-2}\Delta u), \quad (x,t)\in \Omega\times (0,T), \\
  u(x,t)=0,\quad \frac{\partial u}{\partial\vec{n}} (x,t)=0,  \quad (x,t)\in \partial\Omega\times (0,T), \\
  u(x,0)=u_0(x),\quad x\in\Omega.
  \end{array}
\right.
\end{align}
Since the operator in equation \eqref{eq:1.1} is nonlocal, it requires to prescribe the value of $u$ outside the domain $\Omega$ for the Dirichlet boundary condition.
Moreover, unlike the classical fourth-order equation \eqref{eq:1.2} which require two boundary conditions, the nonlocal fourth-order equation \eqref{eq:1.1} involves only one boundary condition, as the other boundary condition (analogous to the Neumann boundary condition $\frac{\partial u}{\partial\vec{n}} =0$ on $\partial\Omega$) is implicitly contained.
In fact, it is possible to deduce the homogeneous nonlocal Neumann boundary condition from the boundary condition in \eqref{eq:1.1} in the framework of the nonlocal calculus \cite{du2013nonlocal,radu2017nonlocal}.

Recently, higher-order nonlocal equations of the form of \eqref{eq:1.1} have emerged as an valuable tool for image processing and data analysis.
In \cite{wen2023nonlocal}, the authors propose the nonlocal biharmonic equation
\begin{align}\label{eq:1.3}
\left\{
  \begin{array}{ll}
  \frac{\partial u}{\partial t}=-\Delta_{NL}(\Delta_{NL}u), \quad (x,t)\in \Omega_E\times (0,T), \\
u(x,0)=u_0(x),\quad x\in\Omega_E,
  \end{array}
\right.
\end{align}
 for image denoising, where $u_0$ is the given noisy image and the solution $u(x,t)$ corresponds to the restored image with the scale parameter $t$.
It is a higher order generalization of the nonlocal heat equation \cite{cortazar2008approximate}
\begin{align*}
\left\{
  \begin{array}{ll}
  \frac{\partial u}{\partial t}=\Delta_{NL}u, \quad (x,t)\in \Omega_E\times (0,T), \\
u(x,0)=u_0(x),\quad x\in\Omega_E,
  \end{array}
\right.
\end{align*}
inspired by higher order partial differential equations (PDEs) in image processing \cite{lysaker2003noise,you2000fourth}.
Nonlocal equations share many properties with classical PDEs, but they have no regularizing
effect \cite{chasseigne2006asymptotic}. This is particularly desirable in applications such as image processing, where discontinuous solutions are often preferred \cite{shi2021image,shi2021coupling}.
Equation \eqref{eq:1.3} inherits the advantages of both nonlocal and higher-order methods, preserving texture information in images while reducing contrast loss.
The property of equation \eqref{eq:1.3} has been well studied.
In \cite{wen2023nonlocal}, the authors prove the existence and uniqueness of solutions for equation \eqref{eq:1.3} and show that the solution preserves the mean value for image denoising.
For nonlocal problems, an important property is the connection with the classical analogue.
In \cite{shi2023nonlocal}, the authors discuss the nonlocal-to-local convergence of the problem with the Dirichlet boundary condition and the Navier boundary condition.
It reveals that the nonlocal equation \eqref{eq:1.1} (after appropriately rescale the nonlocal kernel $J$) is a non-smooth approximation to the classical equation \eqref{eq:1.2} when $p=2$.

The linear case $p=2$ is easy to solve numerically. Nonetheless, nonlinear equations, i.e., $p\neq 2$, are often essential in many applications.
The nonlocal $p$-biharmonic operator is considered for image restoration under Cauchy noise \cite{bendaida2025nonlocal}.
We mention that the graph $p$-biharmonic operator has been used in data analysis \cite{dong2020cure,el2020discrete}. It can be considered as the discretization of the nonlocal $p$-biharmonic operator.

The purpose of this paper is to extend some of the results for equation \eqref{eq:1.2} to the nonlocal setting, including the existence and uniqueness of solutions, as well as the long-time behavior as $t\rightarrow\infty$. In addition, we establish the connection between the two equations by proving that the solution of equation \eqref{eq:1.1} converges to the solution of equation \eqref{eq:1.2}. This convergence requires an appropriate rescaling of the nonlocal kernel $J$.
This paper extends the results of \cite{shi2023nonlocal} from the case $p=2$ to the more general case $1<p<\infty$.

For notational simplicity, we restrict our attention to homogeneous boundary conditions for equation \eqref{eq:1.1}.
In practical applications in image processing and data analysis, it is common to consider the nonhomogeneous equation
\begin{align}\label{eq:nonhomogeneous}
\left\{
  \begin{array}{ll}
  \frac{\partial u}{\partial t}=-\Delta_{NL}(|\Delta_{NL}u|^{p-2}\Delta_{NL}u), \quad (x,t)\in \Omega\times (0,T), \\
  u(x,t)=g(x),  \quad (x,t)\in \mathbb{R}^N\setminus \overline{\Omega}\times (0,T), \\
  u(x,0)=u_0(x),\quad x\in\Omega.
  \end{array}
\right.
\end{align}
The results established for equation \eqref{eq:1.1} can be straightforwardly generalized to equation \eqref{eq:nonhomogeneous}.

The rest of this paper is organized as follows. In section 2, we study the existence and uniqueness of solutions for the nonlocal $p$-biharmonic equation and discuss the large time behavior of the solution when $t\rightarrow\infty$.
In section 3, we rescale the nonlocal kernel and prove the convergence of the solution for the rescaled nonlocal equation to the solution for the classical $p$-biharmonic equation with the Dirichlet boundary condition.
Section 4 is devoted to numerical experiments,
where we discuss the application of the nonlocal equation for image inpainting.
We conclude the paper in section 5.

\section{Properties of the nonlocal equation}
In this section, we study properties of solutions for the nonlocal $p$-biharmonic equation \eqref{eq:1.1}, including the well-posedness and the large time behavior.
They are generalizations of the case of the local $p$-biharmonic equation \cite{liu2006weak}.

Throughout this paper, we assume that $\Omega\subset\mathbb{R}^N$ is a bounded domain,  $Q_T=\Omega\times(0,T)$, $T>0$, and $1<p<+\infty$.
The nonlocal kernel $J: \mathbb{R}^n\rightarrow\mathbb{R}$ is a nonnegative continuous radial function with compact support $\textrm{supp}(J)$.
The Dirichlet boundary condition in equation \eqref{eq:1.1} can be replaced by
\begin{equation*}
  u(x,t)=0,  \quad (x,t)\in \Omega_E\setminus \overline{\Omega}\times (0,T),
\end{equation*}
for a bounded domain $\Omega_E\supset\{(\Omega+2\textrm{supp}(J))\}$.

To handle the Dirichlet boundary, we use the notation
\begin{align*}
  \widetilde{u}(x)=\left\{
  \begin{array}{ll}
  u(x), \quad x\in\Omega, \\
  0,\quad x\in\Omega_E\setminus \overline{\Omega},
  \end{array}
  \right.
\end{align*}
for a function $u:\Omega\rightarrow\mathbb{R}$.
Then equation \eqref{eq:1.1} is equivalent to
\begin{align}\label{eq:2.1}
\left\{
  \begin{array}{ll}
  \frac{\partial u}{\partial t}=-\Delta_{NL}(|\Delta_{NL}\widetilde{u}|^{p-2}\Delta_{NL}\widetilde{u}), \quad (x,t)\in Q_T, \\
  u(x,0)=u_0(x),\quad x\in\Omega.
  \end{array}
\right.
\end{align}
Namely, the Dirichlet boundary condition is implicitly contained in the nonlocal $p$-biharmonic operator.

\begin{definition}
  A solution of equation \eqref{eq:1.1} is a function $u\in C([0,T]; L^2(\Omega))$ with $\frac{\partial u}{\partial t}\in L^2(Q_T)$ that satisfies
  \begin{equation}\label{eq:2.2}
    \frac{\partial u}{\partial t}=-\Delta_{NL}(|\Delta_{NL}\widetilde{u}|^{p-2}\Delta_{NL}\widetilde{u}),
    \quad \mbox{a.e. in~} Q_T,
  \end{equation}
  and $u(x,0)=u_0(x)$ for a.e. $x\in\Omega$.
\end{definition}

Let $1\leq q<\infty$.
it follows from the boundedness of $J$ that
\begin{equation}\label{eq:2.2a}
  \int_{\Omega_E}|\Delta_{NL}\widetilde{u}|^qdx=\int_{\Omega_E}\left|\int_{\Omega_E}J(x-y)({\widetilde{u}}(y)
  -{\widetilde{u}}(x))dy\right|^qdx\leq C\int_{\Omega}|{u}|^qdx.
\end{equation}
Conversely,
by the inequality $2^{1-q}|b|^q\leq |b-a|^q+|a|^q$,
\begin{align}\label{eq:2.2b}
\begin{split}
  \int_{\Omega}&\left|u\right|^qdx\leq C\int_{\Omega_E}\left|\int_{\Omega_E}J(x-y)\widetilde{u}(x)dy\right|^qdx\leq C\int_{\Omega_E}\left|\Delta_{NL}\widetilde{u}\right|^qdx \\
  &+C\int_{\Omega_E}\left|\int_{\Omega_E}J(x-y)\widetilde{u}(y)dy\right|^qdx
  \leq C\int_{\Omega_E}\left|\Delta_{NL}\widetilde{u}\right|^qdx+ C\left|\int_{\Omega}udx\right|^q.
\end{split}
\end{align}
The above two inequalities will be frequently used.

The well-posedness of equation \eqref{eq:1.1} is stated as follows.
\begin{theorem}\label{th:existence}
  Let $u_0\in L^2(\Omega)\cap L^p(\Omega)$. Equation \eqref{eq:1.1} admits a unique solution.
\end{theorem}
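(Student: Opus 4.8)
The plan is to recast equation \eqref{eq:1.1} as a gradient flow in $L^2(\Omega)$ for the convex functional
\begin{equation*}
  \Phi(u)=\frac{1}{p}\int_{\Omega_E}|\Delta_{NL}\widetilde{u}|^p\,dx,
\end{equation*}
defined for $u\in L^2(\Omega)\cap L^p(\Omega)$ and extended by $+\infty$ otherwise. The right-hand side $-\Delta_{NL}(|\Delta_{NL}\widetilde{u}|^{p-2}\Delta_{NL}\widetilde{u})$ is, up to the $\widetilde{\cdot}$ bookkeeping, exactly the $L^2$-subdifferential of $\Phi$: one checks by a direct computation (swapping the order of integration, using the symmetry $J(x-y)=J(y-x)$ and that $\widetilde{u}$ vanishes off $\Omega$) that for smooth directions $v$,
\begin{equation*}
  \langle \Phi'(u),v\rangle=\int_{\Omega_E}|\Delta_{NL}\widetilde{u}|^{p-2}\Delta_{NL}\widetilde{u}\,\Delta_{NL}\widetilde{v}\,dx
  =\int_{\Omega}\bigl(-\Delta_{NL}(|\Delta_{NL}\widetilde{u}|^{p-2}\Delta_{NL}\widetilde{u})\bigr)v\,dx.
\end{equation*}
So \eqref{eq:2.2} reads $u_t=-\partial\Phi(u)$, and the Brezis–Komura theory of maximal monotone operators (equivalently, Crandall–Liggett / nonlinear semigroup theory) gives existence and uniqueness of a strong solution $u\in C([0,T];L^2(\Omega))$ with $u_t\in L^2(Q_T)$ for every initial datum in $\overline{D(\Phi)}=L^2(\Omega)$, and in fact $\Phi(u(t))$ nonincreasing, provided $\Phi$ is proper, convex, and lower semicontinuous on $L^2(\Omega)$.

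**The three things to verify** are therefore: (i) $\Phi$ is proper — immediate since $\Phi(u_0)<\infty$ by \eqref{eq:2.2a} and the hypothesis $u_0\in L^p(\Omega)$; (ii) $\Phi$ is convex — immediate since $u\mapsto \Delta_{NL}\widetilde{u}$ is linear and $t\mapsto \frac1p|t|^p$ is convex; (iii) $\Phi$ is lower semicontinuous on $L^2(\Omega)$ — this is where the estimates \eqref{eq:2.2a}–\eqref{eq:2.2b} do the work. If $u_n\to u$ in $L^2(\Omega)$ with $\sup_n\Phi(u_n)<\infty$, then by \eqref{eq:2.2b} (applied with $q=p$) the sequence $u_n$ is bounded in $L^p(\Omega)$, hence $\Delta_{NL}\widetilde{u_n}$ is bounded in $L^p(\Omega_E)$; passing to a subsequence, $\Delta_{NL}\widetilde{u_n}\rightharpoonup \Delta_{NL}\widetilde{u}$ weakly in $L^p(\Omega_E)$ (the limit is identified using the $L^2$-convergence and continuity of the integral operator), and weak lower semicontinuity of the $L^p$-norm gives $\Phi(u)\le\liminf\Phi(u_n)$. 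I would also record that the subdifferential is single-valued here — because $J$ is continuous and radial with compact support one can show $\Delta_{NL}$ maps $L^2$ boundedly into itself and the only candidate for an element of $\partial\Phi(u)\cap L^2$ is the stated expression — which together with \eqref{eq:2.2a} (guaranteeing the nonlinear term lies in $L^2(\Omega)$ whenever $u\in L^2\cap L^p$) justifies the pointwise-a.e. formulation in Definition.

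**The main obstacle** is bookkeeping around the Dirichlet extension $\widetilde{\cdot}$ rather than any deep analytic difficulty: one must be careful that the functional lives on functions of $x\in\Omega$ while the integrand involves the extension on $\Omega_E$, and that the adjoint of $u\mapsto \Delta_{NL}\widetilde{u}$ (as a map $L^2(\Omega)\to L^2(\Omega_E)$) is $w\mapsto (\Delta_{NL}w)|_\Omega$ — this is the integration-by-parts identity that makes the Euler–Lagrange expression come out right and is exactly the step where the choice $\Omega_E\supset\Omega+2\,\mathrm{supp}(J)$ is used (so that $\Delta_{NL}\widetilde{u}$ is supported in $\Omega+\mathrm{supp}(J)\subset\Omega_E$ and no mass is lost). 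A secondary point worth stating carefully is the regularity $u_t\in L^2(Q_T)$: for a gradient flow of a convex l.s.c. functional this holds for all $t>0$ with $\int_0^T\|u_t\|_{L^2}^2\,dt\le \Phi(u_0)$ once $u_0\in D(\Phi)$, which is our hypothesis $u_0\in L^2\cap L^p$; I would quote this bound explicitly since it is what Definition demands and what later sections will need. Uniqueness is then the standard contraction estimate: if $u,v$ are two solutions, $\frac{d}{dt}\tfrac12\|u-v\|_{L^2}^2=-\langle\partial\Phi(u)-\partial\Phi(v),u-v\rangle\le 0$ by monotonicity, so $u(0)=v(0)$ forces $u\equiv v$.
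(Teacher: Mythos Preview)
Your approach is correct and genuinely different from the paper's. The paper proceeds by the Rothe method: it discretizes in time, solves each implicit step by minimizing the functional
\[
  E(w)=\frac{1}{2h}\|w\|_{L^2}^2-\frac{1}{h}\langle u^{m,j-1},w\rangle+\frac{1}{p}\|\Delta_{NL}\widetilde{w}\|_{L^p(\Omega_E)}^p,
\]
derives uniform bounds on the piecewise-linear and piecewise-constant interpolants, passes to weak limits, and then uses Minty's monotonicity trick to identify the weak limit of $|\Delta_{NL}\widetilde{w}^m|^{p-2}\Delta_{NL}\widetilde{w}^m$. Your route via Brezis--Komura is shorter and more conceptual: once $\Phi$ is shown to be proper, convex, and lower semicontinuous on $L^2(\Omega)$, the abstract theory delivers existence, uniqueness, the energy inequality, and even some extra regularity (e.g.\ $t\mapsto u(t)$ Lipschitz on compact subintervals of $(0,T]$) for free. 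The paper's approach, by contrast, is self-contained and constructive, and the time-discrete scheme it builds is of independent interest numerically; it also makes the Minty-type argument explicit, which is reused later in the nonlocal-to-local convergence proof.

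One small correction to your write-up: the parenthetical claim that \eqref{eq:2.2a} ``guarantees the nonlinear term lies in $L^2(\Omega)$ whenever $u\in L^2\cap L^p$'' is only valid for $1<p\le 2$; for $p>2$ the expression $\Delta_{NL}(|\Delta_{NL}\widetilde{u}|^{p-2}\Delta_{NL}\widetilde{u})$ is a priori only in $L^{p/(p-1)}(\Omega)$, which is strictly larger than $L^2(\Omega)$. This does not break your argument---the semigroup theory still produces a solution with $u(t)\in D(\partial\Phi)$ for a.e.\ $t>0$, and on $D(\partial\Phi)$ the subdifferential is by definition in $L^2$ and coincides with the nonlinear term---but the justification for the pointwise-a.e.\ formulation in the Definition should be routed through that observation rather than through \eqref{eq:2.2a} alone.
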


\begin{proof}
1.
We prove the existence by the Rothe method.
Let $m$ be a positive integer and $h=\frac{T}{m}$.
For a given $u^{m,j-1}\in L^2(\Omega)$,
consider the semi-discrete equation
  \begin{align}\label{eq:2.3}
  \begin{split}
    \frac{u-u^{m,j-1}}{h}=-\Delta_{NL}(|\Delta_{NL}\widetilde{u}|^{p-2}\Delta_{NL}\widetilde{u}),\quad \mbox{in } \Omega.
  \end{split}
  \end{align}
  By \eqref{eq:2.2a} and Young's inequality, the associated energy functional
  \begin{align*}
    E(w)=\frac{1}{2h}\int_{\Omega}|w|^2dx-\frac{1}{h}\int_{\Omega}u^{m,j-1}wdx
  +\frac{1}{p}\int_{\Omega_E}|\Delta_{NL}\widetilde{w}|^pdx,
  \end{align*}
 is well-defined for any $w\in L^2(\Omega)\cap L^p(\Omega)$ and has a lower bound.
This implies the existence of a minimizing sequence $\{u_{k}\}\subset L^2(\Omega)\cap L^p(\Omega)$, such that
\begin{equation*}
  \lim_{k\rightarrow\infty}E(u_{k})=\inf_{w\in L^2(\Omega)\cap L^p(\Omega)}E(w),
\end{equation*}
and
\begin{equation*}
  \int_{\Omega}|u_{k}|^2dx
  +\int_{\Omega_E}\left|\Delta_{NL}\widetilde{u}_{k}\right|^pdx\leq C\left(E(u_{k})+\int_{\Omega}|u^{m,j-1}|^2dx\right)\leq C.
\end{equation*}
Besides, by \eqref{eq:2.2b},
\begin{equation*}
  \int_{\Omega}\left|{u_{k}}\right|^pdx \leq C.
\end{equation*}
Then there exist a subsequence of $\{u_{k}\}$ (still denoted by itself) and measurable functions $u^{m,j}\in L^2(\Omega)\cap L^p(\Omega)$, $\theta\in L^p(\Omega_E)$, such that
\begin{gather*}
  u_{k} \rightharpoonup u^{m,j},\quad \textrm{in~} L^2(\Omega)\cap L^p(\Omega),\\
  \Delta_{NL}\widetilde{u}_{k} \rightharpoonup \theta, \quad \textrm{in~} L^p(\Omega_E).
\end{gather*}
For any $\varphi\in C_0^\infty(\Omega)$,
by the nonlocal integration by parts,
\begin{align*}
  \int_{\Omega_E}\Delta_{NL}\widetilde{u}_{k}\cdot\widetilde{\varphi} dx=\int_{\Omega_E}\widetilde{u}_{k}\Delta_{NL}\widetilde{\varphi} dx\rightarrow
  \int_{\Omega_E}\widetilde{u}^{m,j}\Delta_{NL}\widetilde{\varphi} dx
  =\int_{\Omega_E}\Delta_{NL}\widetilde{u}^{m,j}\cdot\widetilde{\varphi} dx,
\end{align*}
as $k\rightarrow+\infty$.
It means that $\theta=\Delta_{NL}\widetilde{u}^{m,j}$.
Consequently, by the weakly lower semi-continuity of $E(w)$ on $L^2(\Omega)\cap L^p(\Omega)$, $u^{m,j}$ is a minimizer of $E(w)$ and it satisfies the Euler--Lagrange equation
\begin{equation}\label{eq:2.4}
  \int_{\Omega}\frac{u^{m,j}-u^{m,j-1}}{h}\varphi dx+\int_{\Omega_E}|\Delta_{NL}\widetilde{u}^{m,j}|^{p-2}\Delta_{NL}\widetilde{u}^{m,j}\Delta_{NL}\widetilde{\varphi} dx=0,
\end{equation}
for any $\varphi\in L^2(\Omega)\cap L^p(\Omega)$.

Let $u^{m,0}=u_0$.
By repeating the above procedure for $j=1,2,\cdots,m$, we obtain a sequence of functions $\{u^{m,j}\}_{j=1}^m\subset L^2(\Omega)\cap L^p(\Omega)$ that satisfy identity \eqref{eq:2.4}.

2.
Define approximate solutions
\begin{align*}
  u^m(x,t)&=\sum_{j=1}^{m}\chi^{m,j}(t)u^{m,j-1}(x)
  + \sum_{j=1}^{m}\chi^{m,j}(t)\lambda^{m,j}(t) \left(u^{m,j}(x)-u^{m,j-1}(x)\right),\\
  w^m(x,t)&=\sum_{j=1}^{m}\chi^{m,j}(t)u^{m,j}(x),
\end{align*}
where $\chi^{m,j}$ is the characteristic function on the interval $[(j-1)h, jh)$ and
\begin{equation*}
  \lambda^{m,j}(t)=
  \begin{cases}
    \frac{t}{h}-(j-1), & \mbox{if } t\in[(j-1)h, jh), \\
    0, & \mbox{otherwise}.
  \end{cases}
\end{equation*}
It follows from \eqref{eq:2.4} that
\begin{equation}\label{eq:2.5}
  \int_{\Omega}\frac{\partial u^m}{\partial t}\varphi dx+\int_{\Omega_E}|\Delta_{NL}\widetilde{w}^m|^{p-2}\Delta_{NL}\widetilde{w}^m\Delta_{NL}\widetilde{\varphi} dx=0,
\end{equation}
for any $t\in (0,T)$.

Let $\varphi=u^{m,j}-u^{m,j-1}$ in \eqref{eq:2.4}. It yields that
\begin{equation*}
  \frac{1}{h}\int_{\Omega}|u^{m,j}-u^{m,j-1}|^2dx+
  \frac{1}{p}\int_{\Omega_E}|\Delta_{NL}\widetilde{u}^{m,j}|^pdx
  \leq  \frac{1}{p}\int_{\Omega_E}|\Delta_{NL}\widetilde{u}^{m,j-1}|^pdx.
\end{equation*}
Summing over $j=1,\cdots m$, we obtain
\begin{equation}\label{eq:2.6}
  \frac{1}{h}\sum_{j=1}^{m}\int_{\Omega}|u^{m,j}-u^{m,j-1}|^2dx+
  \frac{1}{p}\int_{\Omega_E}|\Delta_{NL}\widetilde{u}^{m,j}|^pdx
  \leq  \frac{1}{p}\int_{\Omega_E}|\Delta_{NL}\widetilde{u}_0|^pdx.
\end{equation}
Consequently,
\begin{align*}
  \iint_{Q_T}\left|\frac{\partial u^m}{\partial t}\right|^2dxdt
  &=\iint_{Q_T}\left|\frac{1}{h}\chi^{m,j}\left(u^{m,j}-u^{m,j-1}\right)\right|^2dxdt\\
  &=\frac{1}{h^2}\sum_{j=1}^{m}h\int_\Omega\left|u^{m,j}-u^{m,j-1}\right|^2dx\leq C,
\end{align*}
and
\begin{equation*}
  \sup_{t\in[0,T]}\int_{\Omega_E}
  |\Delta_{NL}\widetilde{w}^m|^pdx
  \leq C.
\end{equation*}
According to the definition of $u^m$ and $w^m$,
\begin{align*}
  \iint_{Q_T}\left|w^m-u^m\right|^2dxdt
  &=\iint_{Q_T}\left|\sum_{j=1}^{m}\chi^{m,j}(1-\lambda^{m,j})(u^{m,j}-u^{m,j-1})\right|^2dxdt\\
  &\leq h\sum_{j=1}^{m}\int_{\Omega}\left|u^{m,j}-u^{m,j-1}\right|^2dxdt\rightarrow 0,\quad \mbox{as } m\rightarrow \infty.
\end{align*}
Let $\varphi=w^{m}$ in \eqref{eq:2.5}. We further have
\begin{equation*}
  \sup_{t\in[0,T]}\int_{\Omega}|u^m|^2dx\leq C.
\end{equation*}
All the above estimates allow us to extract subsequences of $\{u^m\}$ and $\{w^m\}$ (still denoted by themselves), such that
\begin{align*}
  u^m\stackrel{*}{\rightharpoonup}& u, \quad \textrm{in}~ L^\infty((0,T);L^2(\Omega)), \\
  \frac{\partial u^m}{\partial t}\rightharpoonup& \frac{\partial u}{\partial t}, \quad \textrm{in}~ L^2(Q_T),\\
  w^m\rightharpoonup& u, \quad \textrm{in}~ L^2(Q_T),\\
  |\Delta_{NL}\widetilde{w}^m|^{p-2}\Delta_{NL}\widetilde{w}^m\rightharpoonup & \vartheta, \quad \textrm{in}~ L^{p/(p-1)}(\Omega_E\times (0,T)),
\end{align*}
for measurable functions $u\in L^\infty((0,T);L^2(\Omega))$ and $\vartheta\in L^{p/(p-1)}(\Omega_E\times (0,T))$.
Furthermore, $u\in C([0,T];L^2(\Omega))$ and $u(x,0)=u_0(x)$ for a.e. $x\in\Omega$.

3. To show that $u$ is a solution of equation \eqref{eq:2.1}, we are left to verify identity \eqref{eq:2.2}.
Passing to the limit $m\rightarrow \infty$ in \eqref{eq:2.5} and integrating over $(0,T)$ yield
\begin{align}\label{eq:2.7}
  \begin{split}
     \iint_{Q_T}\frac{\partial u}{\partial t}\varphi dxdt+\int_0^{T}\int_{\Omega_E}\vartheta \cdot \Delta_{NL}\widetilde{\varphi} dxdt=0,
  \end{split}
\end{align}
for any $\varphi\in L^2(\Omega)\cap L^p(\Omega)$.
We show $\vartheta=|\Delta_{NL}\widetilde{u}|^{p-2}\Delta_{NL}\widetilde{u}$ in the following.

It follows from the monotonicity of $|s|^{p-2}s$ that
\begin{align*}
\int_{\Omega_E}\left(|\Delta_{NL}\widetilde{u}^{m,j}|^{p-2}\Delta_{NL}\widetilde{u}^{m,j}
  -|\Delta_{NL}\widetilde{v}|^{p-2}\Delta_{NL}\widetilde{v}\right)\cdot\Delta_{NL}(\widetilde{u}^{m,j}-\widetilde{v}) dx
  \geq 0,
\end{align*}
for any $v\in L^p(\Omega_E)$.
Taking $\varphi=u^{m,j}$ in \eqref{eq:2.4}, we arrive at
\begin{align*}
     \int_{\Omega}\frac{{u}^{m,j}-u^{m,j-1}}{h}{u}^{m,j} dx
     +\int_{\Omega_E}|\Delta_{NL}\widetilde{u}^{m,j}|^{p} dx=0.
\end{align*}
Combining it with the above monotonicity and integrating over $[(j-1)h, jh]$ lead to
\begin{align*}
     \int_{\Omega}\frac{|u^{m,j}|^2-|u^{m,j-1}|^2}{2} dx
     &+\int_{(j-1)h}^{jh}\int_{\Omega_E}|\Delta_{NL}\widetilde{u}^{m,j}|^{p-2}\Delta_{NL}\widetilde{u}^{m,j}\cdot\Delta_{NL}\widetilde{v} dx\\
     &+\int_{(j-1)h}^{jh}\int_{\Omega_E}|\Delta_{NL}\widetilde{v}|^{p-2}\Delta_{NL}\widetilde{v}\cdot\Delta_{NL}(\widetilde{u}^{m,j}
     -\widetilde{v}) dx\leq 0.
\end{align*}
Summing up the above inequalities for $j=1,2,\cdots,m$ and
passing to the limit $m\rightarrow \infty$, we have
\begin{align*}
     \int_{\Omega}\frac{|u(x,T)|^2-|u_0|^2}{2} dx&+\int_0^{T}\int_{\Omega_E}\vartheta\cdot\Delta_{NL}\widetilde{v} dxdt\\
     &+\int_0^{T}\int_{\Omega_E}|\Delta_{NL}\widetilde{v}|^{p-2}\Delta_{NL}\widetilde{v}\cdot\Delta_{NL}(\widetilde{u}
     -\widetilde{v}) dxdt \leq 0.
\end{align*}
Substituting the above result into \eqref{eq:2.7} with $\varphi=u$, we obtain
\begin{align*}
  \int_0^{T}\int_{\Omega_E}\left(|\Delta_{NL}\widetilde{v}|^{p-2}\Delta_{NL}\widetilde{v}-\vartheta\right)
  \Delta_{NL}(\widetilde{u}-\widetilde{v}) dxdt\leq 0.
\end{align*}
Taking $v=u-\alpha\varphi$ with $\alpha>0$, $\varphi\in C_0^\infty(Q_T)$ and let $\alpha\rightarrow0^+$, we obtain
\begin{align*}
  \int_0^{T}\int_{\Omega_E}\left(|\Delta_{NL}\widetilde{u}|^{p-2}\Delta_{NL}\widetilde{u}-\vartheta\right)
  \Delta_{NL}\widetilde{\varphi} dxdt\leq 0.
\end{align*}
Similarly, we get an opposite inequality by taking $\alpha< 0$.
Consequently,
\begin{align*}
  \int_0^{T}\int_{\Omega_E}\left(|\Delta_{NL}\widetilde{u}|^{p-2}\Delta_{NL}\widetilde{u}-\vartheta\right)
  \Delta_{NL}\widetilde{\varphi} dxdt= 0.
\end{align*}
Now we return to \eqref{eq:2.7} to find
\begin{align*}
     \iint_{Q_T}\frac{\partial u}{\partial t}\varphi dxdt&=-\int_0^{T}\int_{\Omega_E}|\Delta_{NL}\widetilde{u}|^{p-2}\Delta_{NL}\widetilde{u} \cdot\Delta_{NL}\widetilde{\varphi} dxdt\\
     &=-\int_0^{T}\int_{\Omega}\Delta_{NL}\left(|\Delta_{NL}\widetilde{u}|^{p-2}\Delta_{NL}\widetilde{u}\right) \varphi dxdt.
\end{align*}
for any $\varphi\in C_0^\infty(Q_T)$.
 This completes the proof of \eqref{eq:2.2}.

4.
To prove the uniqueness of the solution, we let $u_{1}$ and $u_{2}$ be two solutions. Then it follows from \eqref{eq:2.2} that
\begin{align*}
  &\int_0^{t}\int_{\Omega}\frac{\partial (u_{1}-u_{2})}{\partial t}\varphi dxd\tau \\ &+\int_0^{t}\int_{\Omega_E}\left(|\Delta_{NL}\widetilde{u}_1|^{p-2}\Delta_{NL}\widetilde{u}_1-
  |\Delta_{NL}\widetilde{u}_2|^{p-2}\Delta_{NL}\widetilde{u}_2\right)\Delta_{NL}\widetilde{\varphi} dxd\tau =0.
\end{align*}
for any $t\in [0,T]$.
Choosing $u_{1}-u_{2}$ as the test function in the above and noticing the monotonicity of $|s|^{p-2}s$,
we have
\begin{equation*}
  \int_{\Omega}|u_{1}(x,t)-u_{2}(x,t)|^2dx =2\int_0^{t}\int_{\Omega}\frac{\partial (u_{1}-u_{2})}{\partial t}(u_{1}-u_{2}) dxd\tau\leq 0.
\end{equation*}
The uniqueness is proven.
\end{proof}

Theorem \ref{th:existence} can be generalized to the non-homogeneous equation \eqref{eq:nonhomogeneous}.
The proof is identical, except that the zero extension $\widetilde{u}$ is replaced by the extension associated with the boundary value $g$, i.e.,
  \begin{align*}
  \widetilde{u}_g(x)=\left\{
  \begin{array}{ll}
  u(x), \quad x\in\Omega, \\
  g(x),\quad x\in\Omega_E\setminus \overline{\Omega}.
  \end{array}
  \right.
\end{align*}
\begin{corollary}\label{cor:1}
  Let $u_0\in L^2(\Omega)\cap L^p(\Omega)$ and $g\in L^p((\Omega_E\setminus \overline{\Omega}))$.
  Equation \eqref{eq:nonhomogeneous} admits a unique solution. More precisely,
   there exists a unique function $u\in C([0,T]; L^2(\Omega))$ with $\frac{\partial u}{\partial t}\in L^2(Q_T)$, such that
  \begin{equation}
    \frac{\partial u}{\partial t}=-\Delta_{NL}(|\Delta_{NL}\widetilde{u}_g|^{p-2}\Delta_{NL}\widetilde{u}_g),
    \quad \mbox{a.e. in~} Q_T,
  \end{equation}
  and $u(x,0)=u_0(x)$ for a.e. $x\in\Omega$.
\end{corollary}

The nonlocal $p$-biharmonic equation \eqref{eq:1.1} has the same asymptotic behavior as the local $p$-biharmonic equation when $p\geq 2$.
It is a corollary of the nonlocal Poincar\'{e} inequality \cite[Proposition 6.25]{andreu2010nonlocal}.
\begin{lemma}
  Let $u\in L^q(\Omega)$ and $q\geq 1$. There exists a constant $C=C(J,\Omega,p)>0$, such that
  \begin{equation}\label{eq:2.8}
    \int_{\Omega}\left|u\right|^qdx\leq C\int_\Omega\int_{\Omega_E} J(x-y)|\tilde{u}(y)-u(x)|^qdydx.
  \end{equation}
\end{lemma}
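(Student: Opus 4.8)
The plan is to argue by contradiction, the main tool being a Fr\'echet--Kolmogorov compactness argument supplemented by a connectedness (``chaining'') argument; the structural fact that is really used is the non-degeneracy of $J$ near the origin. First, since $J$ is continuous, radial, nonnegative and compactly supported, fix $\rho>0$ and $c_0>0$ with $J(z)\ge c_0$ for all $|z|\le\rho$ (if $J$ vanishes at the origin, replace this ball by an annulus on which $J$ is bounded below; nothing changes). By the choice of $\Omega_E$ one has $B_\rho(x)\subset\Omega_E$ whenever $x\in\Omega$. Suppose the inequality failed; then for each $n\in\mathbb N$ there is $u_n\in L^q(\Omega)$ with $\int_\Omega|u_n|^q\,dx=1$ and $I_n:=\int_\Omega\int_{\Omega_E}J(x-y)|\widetilde u_n(y)-u_n(x)|^q\,dy\,dx\to 0$; here $\widetilde u_n$ is the extension by zero, so $\|\widetilde u_n\|_{L^q(\Omega_E)}=1$ and all $\widetilde u_n$ are supported in the fixed bounded set $\overline{\Omega}$.

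\emph{Compactness step.} Restricting the $y$-integration to $B_\rho(x)$, using $J\ge c_0$ there, $\widetilde u_n=u_n$ on $\Omega$, and Tonelli's theorem, one obtains
\[
\int_{B_\rho(0)}\int_\Omega|\widetilde u_n(x+z)-\widetilde u_n(x)|^q\,dx\,dz\le\frac1{c_0}\,I_n\longrightarrow 0 .
\]
Set $\psi:=|B_\rho|^{-1}\mathbf{1}_{B_\rho}$ and $v_n:=\widetilde u_n*\psi$ on $\mathbb R^N$. Since $\int\psi=1$, Jensen's inequality and the preceding estimate give $\|v_n-u_n\|_{L^q(\Omega)}\to 0$. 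At the same time $\{v_n\}$ is bounded in $L^q(\mathbb R^N)$ (Young's convolution inequality, $\psi\in L^1$), all $v_n$ are supported in the fixed bounded set $\overline{\Omega}+\overline{B_\rho}$, and $\|\tau_h v_n-v_n\|_{L^q}\le\|\widetilde u_n\|_{L^q}\|\tau_h\psi-\psi\|_{L^1}\to 0$ as $h\to 0$, uniformly in $n$, by continuity of translation in $L^1$. Hence, by the Fr\'echet--Kolmogorov--Riesz theorem, $\{v_n\}$ is precompact in $L^q$, so along a subsequence $v_n\to v$ in $L^q$, whence $u_n\to u:=v|_\Omega$ in $L^q(\Omega)$ with $\|u\|_{L^q(\Omega)}=1$. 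In particular $\widetilde u_n\to\widetilde u$ in $L^q(\Omega_E)$, the limit being exactly the zero-extension of $u$, so $\widetilde u=0$ a.e.\ on $\Omega_E\setminus\overline{\Omega}$.

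\emph{Identifying the limit.} Passing to a further subsequence with $\widetilde u_n\to\widetilde u$ a.e.\ on $\Omega_E$ (hence $u_n\to u$ a.e.\ on $\Omega$), Fatou's lemma applied to $I_n$ gives $\int_\Omega\int_{\Omega_E}J(x-y)|\widetilde u(y)-u(x)|^q\,dy\,dx=0$, so for a.e.\ $x\in\Omega$ one has $\widetilde u\equiv u(x)$ a.e.\ on $B_\rho(x)$. A standard chaining argument now propagates the value zero: for $x\in\Omega$ within distance $\rho$ of $\partial\Omega$, the ball $B_\rho(x)$ meets $\Omega_E\setminus\overline{\Omega}$ in a set of positive measure on which $\widetilde u=0$, forcing $u(x)=0$; iterating, each step enlarging by $\rho/2$ the relatively open subset of $\Omega$ on which $\widetilde u$ vanishes a.e., and using that $\Omega$ is connected, one concludes $\widetilde u=0$ a.e.\ on $\Omega$. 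This contradicts $\|\widetilde u\|_{L^q(\Omega)}=1$, proving the inequality. (Alternatively one may simply invoke \cite[Proposition 6.25]{andreu2010nonlocal}, of which this is the Dirichlet Poincar\'e inequality.)

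The only genuinely delicate point is the compactness step: a single, non-rescaled kernel $J$ does not turn $L^q$-boundedness into compactness, so one must exploit that $J$ is bounded below near the origin; the averaged translation estimate coming from $I_n\to 0$, upgraded to a true modulus-of-continuity estimate by convolution with $\mathbf{1}_{B_\rho}$, is precisely what makes Fr\'echet--Kolmogorov applicable. The remaining points --- the behavior of the boundary collar in the chaining step, the $q=1$ case of the compactness criterion, and the fact that the constant so obtained depends also on $q$ --- are routine and harmless under mild regularity of $\partial\Omega$.
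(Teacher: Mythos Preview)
Your argument is correct. The paper itself does not supply a proof of this lemma; it merely cites \cite[Proposition~6.25]{andreu2010nonlocal} and states the inequality as a known nonlocal Poincar\'e inequality. What you have written is essentially the standard proof of that proposition: normalise, extract a compact subsequence via a mollification/Fr\'echet--Kolmogorov argument exploiting $J\ge c_0$ on a ball (or annulus), pass to the limit with Fatou, and use connectedness plus the zero extension to force the limit to vanish, contradicting the normalisation. So you have filled in what the paper leaves as a citation, and your observation that the constant should read $C(J,\Omega,q)$ rather than $C(J,\Omega,p)$ is a genuine typo in the statement.

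Two small remarks on presentation. First, the ``boundary collar'' step does not actually need any regularity of $\partial\Omega$: since $\Omega_E\setminus\overline{\Omega}$ is open, any ball $B_\rho(x)$ that meets it meets it in positive measure, which is all you use. Second, in the chaining step it is cleanest to phrase the conclusion of the Fatou argument as ``$\widetilde u$ is a.e.\ equal to a constant on every ball $B_\rho(x)$ with $x\in\Omega$'', from which a.e.\ constancy on the connected set $\Omega\cup\{x:\operatorname{dist}(x,\Omega)<\rho\}$ follows by the usual overlapping--balls argument; this avoids having to track null sets through the iteration.
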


\begin{theorem}
  Let $p\geq 2$ and $u$ be the solution of equation \eqref{eq:1.1}.
  Then for any $t>0$, we have
  \begin{equation*}
    \int_{\Omega}|u(x,t)|^2dx\leq
    \begin{cases}
      e^{-C_1t}\int_{\Omega}|u_0(x)|^2dx, & \mbox{if } p=2, \\
      \frac{1}{\left(C_2t+C_3\right)^{2/(p-2)}}, & \mbox{if } p>2.
    \end{cases}
  \end{equation*}
  Here $C_1, C_2, C_3$ are positive constants.
\end{theorem}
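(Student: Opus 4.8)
The plan is to run the classical dissipation-plus-Gr\"onwall argument adapted to the nonlocal operator. First I would derive the energy identity; since $u\in C([0,T];L^2(\Omega))$, $\partial_t u\in L^2(Q_T)$, and $|\Delta_{NL}\widetilde u|^{p}\in L^1(\Omega_E\times(0,T))$ — the latter because $|\Delta_{NL}\widetilde w^m|^{p-2}\Delta_{NL}\widetilde w^m$ converges weakly in $L^{p/(p-1)}(\Omega_E\times(0,T))$ with limit $|\Delta_{NL}\widetilde u|^{p-2}\Delta_{NL}\widetilde u$, as established in the proof of Theorem~\ref{th:existence} — the map $t\mapsto\int_\Omega|u(x,t)|^2\,dx$ is absolutely continuous, and testing \eqref{eq:2.2} with $u$ together with the symmetry of $J$ (nonlocal integration by parts) gives, for a.e. $t$,
\[
\frac12\frac{d}{dt}\int_\Omega|u(x,t)|^2\,dx=-\int_{\Omega_E}|\Delta_{NL}\widetilde u|^{p-2}\Delta_{NL}\widetilde u\cdot\Delta_{NL}\widetilde u\,dx=-\int_{\Omega_E}|\Delta_{NL}\widetilde u|^p\,dx .
\]

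The crux is the coercivity estimate $\int_{\Omega_E}|\Delta_{NL}\widetilde u|^p\,dx\ge\kappa\big(\int_\Omega|u|^2\,dx\big)^{p/2}$ for some $\kappa=\kappa(J,\Omega,\Omega_E,p)>0$ when $p\ge2$. I would obtain it in two steps. At the exponent $2$, apply the nonlocal Poincar\'{e} inequality \eqref{eq:2.8} with $q=2$, enlarge the outer integral from $\Omega$ to $\Omega_E$ (the integrand being nonnegative), use the symmetrization identity $\int_{\Omega_E}\int_{\Omega_E}J(x-y)|\widetilde u(y)-\widetilde u(x)|^2\,dy\,dx=-2\int_{\Omega_E}\widetilde u\,\Delta_{NL}\widetilde u\,dx$, and then Cauchy--Schwarz in $L^2(\Omega_E)$ to get
\[
\int_\Omega|u|^2\,dx=\int_{\Omega_E}|\widetilde u|^2\,dx\le 2C\Big(\int_{\Omega_E}|\widetilde u|^2\,dx\Big)^{1/2}\Big(\int_{\Omega_E}|\Delta_{NL}\widetilde u|^2\,dx\Big)^{1/2},
\]
where $C$ is the constant in \eqref{eq:2.8}; hence $\int_{\Omega_E}|\Delta_{NL}\widetilde u|^2\,dx\ge(4C^2)^{-1}\int_\Omega|u|^2\,dx$. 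Then H\"older's inequality on the bounded set $\Omega_E$ (using $p\ge2$) gives $\int_{\Omega_E}|\Delta_{NL}\widetilde u|^p\,dx\ge|\Omega_E|^{1-p/2}\big(\int_{\Omega_E}|\Delta_{NL}\widetilde u|^2\,dx\big)^{p/2}$, and combining the two yields the claim with $\kappa=|\Omega_E|^{1-p/2}(4C^2)^{-p/2}$.

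Finally, setting $y(t)=\int_\Omega|u(x,t)|^2\,dx$, the two displays give $y'(t)\le-2\kappa\,y(t)^{p/2}$ for a.e. $t$. If $p=2$ this is $y'\le-2\kappa y$, so Gr\"onwall's inequality gives $y(t)\le e^{-2\kappa t}y(0)$, i.e. $C_1=2\kappa$. If $p>2$, dividing by $y^{p/2}$ and integrating gives $y(t)^{-(p-2)/2}\ge y(0)^{-(p-2)/2}+\kappa(p-2)t$, that is $y(t)\le\big(\kappa(p-2)t+\|u_0\|_{L^2(\Omega)}^{-(p-2)}\big)^{-2/(p-2)}$, so $C_2=\kappa(p-2)$ and $C_3=\|u_0\|_{L^2(\Omega)}^{-(p-2)}$ (the case $u_0=0$ being trivial). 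The main obstacle is the coercivity estimate of the second paragraph: it is exactly where the homogeneous Dirichlet condition is essential, since without forcing $\widetilde u$ to vanish outside $\Omega$ the Dirichlet-type nonlocal Poincar\'{e} inequality \eqref{eq:2.8} fails and $\int_{\Omega_E}|\Delta_{NL}\widetilde u|^2\,dx$ no longer controls $\int_\Omega|u|^2\,dx$; the remaining points (absolute continuity of $y$ from the weak formulation, and the ODE comparison) are routine.
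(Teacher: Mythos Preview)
Your argument is correct and follows essentially the same route as the paper: test \eqref{eq:2.2} with $u$ to get the energy identity, use the nonlocal Poincar\'e inequality \eqref{eq:2.8} at $q=2$ together with the symmetrization identity to control $\int_\Omega|u|^2$ by $\int_{\Omega_E}|\Delta_{NL}\widetilde u|^2$, upgrade to exponent $p$ via H\"older on the bounded set $\Omega_E$, and integrate the resulting differential inequality $y'\le -C\,y^{p/2}$. The only cosmetic differences are that the paper uses Young's inequality (with absorption) where you use Cauchy--Schwarz, and for $p>2$ the paper reduces to Gr\"onwall through the substitution $h(t)=\exp\big(\tfrac{2}{2-p}f(t)^{(2-p)/2}\big)$ whereas you integrate the Bernoulli-type inequality directly; your treatment of the $p>2$ case is in fact the more transparent one and also makes the constants $C_2,C_3$ explicit.
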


\begin{proof}
Let $u$ be the test function for equation \eqref{eq:2.2}. We have
\begin{equation*}
  \frac{1}{2}\int_{\Omega}|u(x,t)|^2dx-\frac{1}{2}\int_{\Omega}|u_0(x)|^2dx=-\int_0^t\int_{\Omega_E}
  |\Delta_{NL}\widetilde{u}|^pdxdt.
\end{equation*}
Define $f(t)=\int_{\Omega}|u(x,t)|^2dx$. It says that
\begin{equation*}
  f'(t)=-\int_{\Omega_E}|\Delta_{NL}\widetilde{u}|^pdx\leq 0.
\end{equation*}
Consequently, by \eqref{eq:2.8} and the nonlocal integration by parts,
\begin{align*}
  f(t)=\int_{\Omega}|u(x,t)|^2dx
  &\leq C\int_\Omega\int_{\Omega_E} J(x-y)|\tilde{u}(y)-u(x)|^2dydx\\
  &= C\int_{\Omega_E}\Delta_{NL}\widetilde{u}\cdot\widetilde{u}dx
  \leq C\int_{\Omega_E}|\Delta_{NL}\widetilde{u}|^2dx + \frac{1}{2}f(t),
\end{align*}
which implies that
\begin{align*}
   f(t) \leq C\int_{\Omega_E}|\Delta_{NL}\widetilde{u}|^2dx
  \leq C\left(\int_{\Omega_E}
  |\Delta_{NL}\widetilde{u}|^pdx\right)^{2/p}  = C|f'(t)|^{2/p},
\end{align*}
Since $f'(t)\leq 0$, we have $f'(t)\leq -Cf(t)^{p/2}$.
If $p=2$, the result follows from Gronwall's inequality.
If $p>2$, we let $h(t)=e^{\frac{2}{2-p}f(t)^{(2-p)/2}}$. Then $h(t)$ satisfies $h'(t)\leq -Ch$.
Again, we utilize Gronwall's inequality to obtain the result.
\end{proof}

\section{Nonlocal-to-local convergence}
In this section, we consider the rescaled nonlocal equation
\begin{align}\label{eq:4.1}
\left\{
  \begin{array}{ll}
  \frac{\partial u}{\partial t}=-\Delta_{NL}^{J_\varepsilon}(|\Delta_{NL}^{J_\varepsilon}u|^{p-2}\Delta_{NL}^{J_\varepsilon}u), \quad (x,t)\in Q_T, \\
  u(x,t)=0,  \quad (x,t)\in \Omega_E\setminus \overline{\Omega}\times (0,T), \\
  u(x,0)=u_0(x),\quad x\in\Omega,
  \end{array}
\right.
\end{align}
where
\begin{equation*}
    \Delta_{NL}^{J_\varepsilon}u(x,t)=\int_{\Omega_E}J_\varepsilon(x-y)\left(u(y,t)-u(x,t)\right)dy,
\end{equation*}
and
\begin{equation*}
  J_{\varepsilon}(x)=\frac{C_{J}}{\varepsilon^{N+2}}J\left(\frac{x}{\varepsilon}\right),\quad C^{-1}_{J}=\frac{1}{2}\int_{\mathbb{R}^N}J(z)|z|^2dz.
\end{equation*}
Here and in the following $J$ is assumed to be nonincreasing, i.e., $J(x)\geq J(y)$ for $|x|\leq |y|$.

The goal is to establish the connection between equation \eqref{eq:4.1} and the classical $p$-biharmonic evolution equation with the Dirichlet boundary condition
\begin{align}\label{eq:4.2}
\left\{
  \begin{array}{ll}
  \frac{\partial u}{\partial t}=-\Delta(|\Delta u|^{p-2}\Delta u), \quad (x,t)\in Q_T, \\
  u(x,t)=0,\quad \frac{\partial u}{\partial\vec{n}} (x,t)=0,  \quad (x,t)\in \partial\Omega\times (0,T), \\
  u(x,0)=u_0(x),\quad x\in\Omega.
  \end{array}
\right.
\end{align}
In the following the boundary $\partial \Omega$ is assumed to be $C^2$.
We recall the unique solvability of equation \eqref{eq:4.2}, whose proof is similar to that of nonlocal equation \eqref{eq:1.1}.

\begin{lemma}\label{le:4.1}
  Let $u_0\in W_0^{2,p}(\Omega)\cap L^2(\Omega)$. Then equation \eqref{eq:4.2} admits a unique weak solution
  $u\in C([0,T]; L^2(\Omega))\cap L^\infty((0,T); W_0^{2,p}(\Omega))$, such that
\begin{align}\label{eq:4.3}
\begin{split}
    -\iint_{Q_T}u\frac{\partial \varphi}{\partial t}dxdt +\iint_{Q_T}|\Delta u|^{p-2}\Delta u \Delta\varphi dxdt =0,
  \end{split}
\end{align}
for any $\varphi\in C_0^\infty(Q_T)$ and $u(x,0)=u_0(x)$ a.e. in $\Omega$.
\end{lemma}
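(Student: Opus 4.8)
The plan is to mirror the existence proof for the nonlocal equation given in Theorem~\ref{th:existence}, carrying out the Rothe (semi-discrete in time) scheme now for the local operator $-\Delta(|\Delta u|^{p-2}\Delta u)$ on $W_0^{2,p}(\Omega)$. First I would fix $m\in\mathbb{N}$, set $h=T/m$, put $u^{m,0}=u_0$, and inductively solve the variational problem of minimizing
\begin{equation*}
  E(w)=\frac{1}{2h}\int_\Omega|w|^2\,dx-\frac1h\int_\Omega u^{m,j-1}w\,dx+\frac1p\int_\Omega|\Delta w|^p\,dx
\end{equation*}
over $W_0^{2,p}(\Omega)\cap L^2(\Omega)$. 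Coercivity and lower boundedness of $E$ come from the classical Poincar\'e-type inequality $\|w\|_{W^{2,p}_0(\Omega)}\le C\|\Delta w\|_{L^p(\Omega)}$ (valid since $\partial\Omega$ is $C^2$), together with Young's inequality to absorb the $L^2$ cross term; weak lower semicontinuity of $w\mapsto\int_\Omega|\Delta w|^p$ on $W^{2,p}_0(\Omega)$ (convexity) yields a minimizer $u^{m,j}$ satisfying the Euler--Lagrange identity
\begin{equation*}
  \int_\Omega\frac{u^{m,j}-u^{m,j-1}}{h}\varphi\,dx+\int_\Omega|\Delta u^{m,j}|^{p-2}\Delta u^{m,j}\,\Delta\varphi\,dx=0
\end{equation*}
for all $\varphi\in W^{2,p}_0(\Omega)\cap L^2(\Omega)$.

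Next I would assemble the piecewise-constant and piecewise-linear interpolants $u^m$, $w^m$ exactly as in the nonlocal proof, and derive the same a priori bounds: testing the Euler--Lagrange identity with $\varphi=u^{m,j}-u^{m,j-1}$ and summing over $j$ gives the energy dissipation estimate, whence $\|\partial_t u^m\|_{L^2(Q_T)}\le C$, $\sup_t\|\Delta w^m\|_{L^p(\Omega)}\le C$ (hence $\sup_t\|w^m\|_{W^{2,p}_0(\Omega)}\le C$ by Poincar\'e), $\|w^m-u^m\|_{L^2(Q_T)}\to0$, and $\sup_t\|u^m\|_{L^2(\Omega)}\le C$. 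Then I extract subsequences with
\begin{gather*}
  u^m\stackrel{*}{\rightharpoonup}u\ \text{in }L^\infty(0,T;L^2(\Omega)),\quad \partial_t u^m\rightharpoonup\partial_t u\ \text{in }L^2(Q_T),\\
  w^m\rightharpoonup u\ \text{in }L^p(0,T;W^{2,p}_0(\Omega)),\quad |\Delta w^m|^{p-2}\Delta w^m\rightharpoonup\vartheta\ \text{in }L^{p'}(Q_T),
\end{gather*}
and note $u\in C([0,T];L^2(\Omega))$ with $u(\cdot,0)=u_0$. Passing to the limit in the time-integrated Euler--Lagrange identity gives
\begin{equation*}
  -\iint_{Q_T}u\,\partial_t\varphi\,dx\,dt+\iint_{Q_T}\vartheta\,\Delta\varphi\,dx\,dt=0
\end{equation*}
after an integration by parts in $t$ using the initial condition, for $\varphi\in C_0^\infty(Q_T)$; here I would actually first keep $\iint \partial_t u\,\varphi$ and only integrate by parts at the very end to match \eqref{eq:4.3}.

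The one genuinely nontrivial step, as in the nonlocal case, is the identification $\vartheta=|\Delta u|^{p-2}\Delta u$, handled by Minty's monotonicity trick. Using monotonicity of $s\mapsto|s|^{p-2}s$ on $\mathbb{R}$, for any $v\in L^p(0,T;W^{2,p}_0(\Omega))$ one has $\iint_{Q_T}(|\Delta u^{m,j}|^{p-2}\Delta u^{m,j}-|\Delta v|^{p-2}\Delta v)\Delta(u^{m,j}-v)\ge0$; combined with the discrete energy identity obtained by testing with $\varphi=u^{m,j}$ (which produces the telescoping term $\frac12\int_\Omega(|u^{m,j}|^2-|u^{m,j-1}|^2)$), summing over $j$ and letting $m\to\infty$ yields
\begin{equation*}
  \frac12\int_\Omega|u(x,T)|^2\,dx-\frac12\int_\Omega|u_0|^2\,dx+\iint_{Q_T}\vartheta\,\Delta v\,dx\,dt+\iint_{Q_T}|\Delta v|^{p-2}\Delta v\,\Delta(u-v)\,dx\,dt\le0.
\end{equation*}
The subtle point is that passing to the limit in the quadratic boundary term requires the energy inequality $\tfrac12\|u(T)\|_2^2+\iint_{Q_T}\vartheta\Delta u\le\tfrac12\|u_0\|_2^2$, which follows from weak lower semicontinuity of the $L^2$ norm and the limiting equation tested (formally) with $u$; then substituting into the limit equation and choosing $v=u\mp\alpha\varphi$, $\alpha>0$, $\varphi\in C_0^\infty(Q_T)$, dividing by $\alpha$ and letting $\alpha\to0^+$ forces $\iint_{Q_T}(|\Delta u|^{p-2}\Delta u-\vartheta)\Delta\varphi=0$, giving $\vartheta=|\Delta u|^{p-2}\Delta u$ as distributions and hence the weak formulation \eqref{eq:4.3}. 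Finally, uniqueness follows verbatim as in Theorem~\ref{th:existence}: if $u_1,u_2$ are two solutions, testing the difference of the equations with $u_1-u_2$ and using monotonicity gives $\int_\Omega|u_1(\cdot,t)-u_2(\cdot,t)|^2\,dx\le0$. The statement that this proof is ``similar to that of the nonlocal equation'' is thus literally accurate, so I would keep the write-up brief, emphasizing only the replacement of \eqref{eq:2.2a}--\eqref{eq:2.2b} by the classical elliptic estimate $\|w\|_{W^{2,p}_0}\le C\|\Delta w\|_{L^p}$ and the use of weak compactness in $W^{2,p}_0(\Omega)$.
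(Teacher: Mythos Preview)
Your proposal is correct and matches the paper's approach exactly: the paper gives no detailed proof of Lemma~\ref{le:4.1}, stating only that it ``is similar to that of nonlocal equation~\eqref{eq:1.1}'', and you have faithfully transcribed the Rothe scheme, energy estimates, Minty trick, and uniqueness argument from Theorem~\ref{th:existence} to the local setting, correctly identifying that the only substantive change is replacing the nonlocal bounds \eqref{eq:2.2a}--\eqref{eq:2.2b} by the elliptic estimate $\|w\|_{W^{2,p}_0}\le C\|\Delta w\|_{L^p}$.
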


Let $\varepsilon>0$ be fixed and $u_\varepsilon$ be the solution of the nonlocal equation \eqref{eq:4.1}. We shall prove the convergence of $u_\varepsilon$ to the solution $u$ of the local equation \eqref{eq:4.2} as $\varepsilon\rightarrow 0$.
Notice that the $p$-biharmonic operator takes the Laplacian as the ingredient. The proof relies on the estimates of the Laplacian operator and the Poisson equation.

\begin{lemma}
If $\varphi\in W^{2,p}(\Omega)$, then
  \begin{equation}\label{eq:4.4}
  \lim_{\varepsilon\rightarrow 0}\left\|\Delta_{NL}^{J_\varepsilon}\hat{\varphi}-\Delta \varphi\right\|_{L^p(\Omega)}= 0,
\end{equation}
where $\hat{\varphi}\in W^{2,p}(\mathbb{R}^N)$ is any extension of $\varphi$.
If $\varphi_\varepsilon, \varphi\in W_0^{1,p}(\Omega)\cap W^{2,p}(\Omega)$
 and $\varphi_\varepsilon \rightharpoonup \varphi$ in $L^p(\Omega)$,  then
  \begin{equation}\label{eq:4.5}
  \Delta_{NL}^{J_\varepsilon}\widetilde{\varphi}_\varepsilon\rightharpoonup\Delta \varphi,
\end{equation}
in ${L^p(\Omega)}$.
\end{lemma}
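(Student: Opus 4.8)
The plan is to prove \eqref{eq:4.4} first by a density/approximation argument and then deduce \eqref{eq:4.5} from it by a duality argument. For \eqref{eq:4.4}, fix an extension $\hat\varphi\in W^{2,p}(\mathbb{R}^N)$ of $\varphi$. The key computation is the pointwise Taylor-expansion identity: writing out
\begin{equation*}
\Delta_{NL}^{J_\varepsilon}\hat\varphi(x)=\int_{\mathbb{R}^N}J_\varepsilon(x-y)\bigl(\hat\varphi(y)-\hat\varphi(x)\bigr)\,dy
=\frac{C_J}{\varepsilon^{N+2}}\int_{\mathbb{R}^N}J\!\left(\tfrac{z}{\varepsilon}\right)\bigl(\hat\varphi(x+z)-\hat\varphi(x)\bigr)\,dz,
\end{equation*}
substituting $z=\varepsilon w$, the odd first-order term vanishes because $J$ is radial, and the normalization $C_J^{-1}=\tfrac12\int J(w)|w|^2\,dw$ together with radial symmetry (so that $\int J(w)w_iw_j\,dw=\tfrac{1}{N}\bigl(\int J|w|^2\bigr)\delta_{ij}$... actually one uses $C_J$ precisely tuned so that $\frac{C_J}{2}\int J(w)w_iw_j\,dw=\delta_{ij}$ after the standard radial averaging) makes the second-order term reproduce $\Delta\varphi(x)$. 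For $\varphi\in C_0^\infty(\mathbb{R}^N)$ this gives uniform (hence $L^p(\Omega)$) convergence with a remainder controlled by the modulus of continuity of $D^2\hat\varphi$. For general $\hat\varphi\in W^{2,p}(\mathbb{R}^N)$ one approximates by mollification $\hat\varphi_\delta\to\hat\varphi$ in $W^{2,p}$, and the point is the uniform-in-$\varepsilon$ bound $\|\Delta_{NL}^{J_\varepsilon}\psi\|_{L^p(\Omega)}\le C\|D^2\psi\|_{L^p(\Omega_E)}$ (plus lower-order terms), which one proves by the integral representation $\hat\varphi(x+z)-\hat\varphi(x)=\int_0^1 z\cdot\nabla\hat\varphi(x+sz)\,ds$ applied twice, Minkowski's integral inequality, and the fact that $J_\varepsilon$ has mass $O(\varepsilon^{-2})$ but is supported in a ball of radius $O(\varepsilon)$. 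Splitting $\Delta_{NL}^{J_\varepsilon}\hat\varphi-\Delta\varphi=(\Delta_{NL}^{J_\varepsilon}(\hat\varphi-\hat\varphi_\delta))+(\Delta_{NL}^{J_\varepsilon}\hat\varphi_\delta-\Delta\varphi_\delta)+(\Delta\varphi_\delta-\Delta\varphi)$ and sending first $\varepsilon\to0$ then $\delta\to0$ finishes \eqref{eq:4.4}.

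For \eqref{eq:4.5} the natural route is duality: to show $\Delta_{NL}^{J_\varepsilon}\widetilde\varphi_\varepsilon\rightharpoonup\Delta\varphi$ weakly in $L^p(\Omega)$, one tests against an arbitrary $\psi\in L^{p/(p-1)}(\Omega)$; by density it suffices to take $\psi\in C_0^\infty(\Omega)$. Using the nonlocal integration-by-parts formula $\int_{\Omega_E}\Delta_{NL}^{J_\varepsilon}\widetilde\varphi_\varepsilon\cdot\widetilde\psi\,dx=\int_{\Omega_E}\widetilde\varphi_\varepsilon\,\Delta_{NL}^{J_\varepsilon}\widetilde\psi\,dx$ (valid because $J_\varepsilon$ is radial), and noting that for $\varepsilon$ small $\mathrm{supp}(J_\varepsilon)$-fattened support of $\psi$ stays inside $\Omega$ so that $\Delta_{NL}^{J_\varepsilon}\widetilde\psi=\Delta_{NL}^{J_\varepsilon}\psi$ on $\Omega$ and vanishes on $\Omega_E\setminus\Omega$, we get $\int_{\Omega}\Delta_{NL}^{J_\varepsilon}\widetilde\varphi_\varepsilon\cdot\psi\,dx=\int_{\Omega}\varphi_\varepsilon\,\Delta_{NL}^{J_\varepsilon}\psi\,dx$. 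Now $\Delta_{NL}^{J_\varepsilon}\psi\to\Delta\psi$ strongly in $L^{p/(p-1)}(\Omega)$ by \eqref{eq:4.4} applied to $\psi$ (with the trivial extension, which is $C^\infty$), while $\varphi_\varepsilon\rightharpoonup\varphi$ weakly in $L^p(\Omega)$ by hypothesis; the product of a weakly convergent and a strongly convergent sequence passes to the limit, giving $\int_\Omega\varphi_\varepsilon\Delta_{NL}^{J_\varepsilon}\psi\,dx\to\int_\Omega\varphi\Delta\psi\,dx=\int_\Omega\Delta\varphi\,\psi\,dx$, the last equality by integration by parts using $\varphi\in W_0^{1,p}\cap W^{2,p}$ and $\psi\in C_0^\infty(\Omega)$. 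To upgrade this from convergence-of-pairings against $C_0^\infty(\Omega)$ to genuine weak convergence in $L^p(\Omega)$ one also needs a uniform bound $\|\Delta_{NL}^{J_\varepsilon}\widetilde\varphi_\varepsilon\|_{L^p(\Omega)}\le C$; but this is not automatic from $\varphi_\varepsilon\rightharpoonup\varphi$ in $L^p$ alone, so in the intended application \eqref{eq:4.5} is presumably invoked with $\varphi_\varepsilon$ additionally bounded in $W^{2,p}$ (or with $\|\Delta_{NL}^{J_\varepsilon}\widetilde\varphi_\varepsilon\|_{L^p}$ bounded), which the uniform estimate above then supplies; I would state and use that bound explicitly.

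The main obstacle is the passage from smooth to general $W^{2,p}$ functions in \eqref{eq:4.4} — specifically, proving the uniform-in-$\varepsilon$ operator bound $\|\Delta_{NL}^{J_\varepsilon}\psi\|_{L^p(\Omega)}\lesssim\|\psi\|_{W^{2,p}(\Omega_E)}$. The subtlety is that $J_\varepsilon$ concentrates with total mass $\sim\varepsilon^{-2}\to\infty$, so naive estimates lose powers of $\varepsilon$; one must genuinely exploit the second-difference structure, i.e. that the radial symmetry kills the $O(\varepsilon^{-1})$ first-order term, and bound the remaining piece via the double integral representation of the second-order Taylor remainder together with Minkowski's inequality in $L^p$. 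A secondary technical point is handling the Dirichlet truncation $\widetilde\varphi_\varepsilon$ near $\partial\Omega$ in \eqref{eq:4.5}: because we only pair against $\psi\in C_0^\infty(\Omega)$ the truncation causes no trouble (the support argument confines everything to the interior for small $\varepsilon$), which is exactly why the duality formulation is the clean way to proceed rather than attempting a direct pointwise estimate of $\Delta_{NL}^{J_\varepsilon}\widetilde\varphi_\varepsilon$ up to the boundary.
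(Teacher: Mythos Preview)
Your approach is essentially identical to the paper's: Taylor expansion for smooth functions plus density for \eqref{eq:4.4}, and nonlocal integration by parts against $\psi\in C_0^\infty(\Omega)$ combined with \eqref{eq:4.4} for \eqref{eq:4.5}. In fact you are more careful than the paper, which writes only a two-term triangle inequality in the density step (omitting the term $\|\Delta_{NL}^{J_\varepsilon}(\hat\varphi-\hat\varphi_n)\|_{L^p(\Omega)}$ that requires precisely the uniform-in-$\varepsilon$ operator bound you flag) and which does not address the uniform $L^p$ bound on $\Delta_{NL}^{J_\varepsilon}\widetilde\varphi_\varepsilon$ needed to pass from pairing against $C_0^\infty(\Omega)$ to genuine weak convergence in $L^p(\Omega)$; your observation that this requires an additional hypothesis (boundedness of $\varphi_\varepsilon$ in $W^{2,p}$, or of $\Delta_{NL}^{J_\varepsilon}\widetilde\varphi_\varepsilon$ in $L^p$) is well taken, and indeed the lemma is only ever applied in the paper under such a bound.
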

\begin{proof}
  If $\varphi\in C^{\infty}(\overline{\Omega})$ and $x\in\Omega$,
  by the change of variables $z=(x-y)/\varepsilon$ and the Taylor expansion,
\begin{align*}
  \Delta_{NL}^{J_\varepsilon}\hat{\varphi}(x)
  &=\frac{C_{J}}{\varepsilon^{N+2}}
  \int_{\mathbb{R}^N}J\left(\frac{x-y}{\varepsilon}\right)\left(\hat{\varphi}(y)-\varphi(x)\right)dy\\
  &=\frac{C_{J}}{\varepsilon^{2}}
  \int_{\mathbb{R}^N}J(z)\left(\hat{\varphi}(x-\varepsilon z)-\varphi(x)\right)dz \\
  &=\frac{C_{J}}{\varepsilon^{2}}
  \int_{\mathbb{R}^N}J(z)\left(-\varepsilon z_i\varphi_{x_i}(x)+\frac{\varepsilon^2}{2}z_iz_j\varphi_{x_ix_j}(x)+O(\varepsilon^3)\right)dz\\
  &=\Delta\varphi(x)+O(\varepsilon).
\end{align*}
Let $\varphi\in W^{2,p}(\Omega)$. There exists a sequence of functions $\{\varphi_n\}_{n=1}^\infty\subset C^{\infty}(\overline{\Omega})$, such that
\begin{equation*}
  \|\varphi_n-\varphi\|_{W^{2,p}(\Omega)}\leq \varepsilon,
\end{equation*}
for sufficient large $n$.
Then  by
\begin{align*}
  \|\Delta_{NL}^{J_\varepsilon}\hat{\varphi}-\Delta \varphi\|_{L^p(\Omega)}
  \leq\|\Delta_{NL}^{J_\varepsilon}\hat{\varphi}_n-\Delta \varphi_n\|_{L^p(\Omega)}
  +\|\Delta\varphi_n-\Delta\varphi\|_{L^p(\Omega)},
  \end{align*}
we obtain \eqref{eq:4.4}.

For the proof of \eqref{eq:4.5}, We utilize integration by parts, i.e.,
\begin{align*}
  \int_{\Omega}\Delta_{NL}^{J_\varepsilon}\widetilde{\varphi}_\varepsilon\cdot\psi dx
  =\int_{\Omega_E}\widetilde{\varphi}_\varepsilon\cdot\Delta_{NL}^{J_\varepsilon}\widetilde{\psi} dx
  \rightarrow \int_{\Omega_E}\widetilde{\varphi}\cdot\Delta{\psi} dx
  =\int_{\Omega}\Delta{\varphi}\cdot \psi dx,
\end{align*}
for any $\psi\in C_0^\infty(\Omega)$.
\end{proof}

We recall the \textit{a priori} estimates of the solution and its nonlocal gradient for the nonlocal Poisson solution \cite{shi2025continuum}.

\begin{lemma}\label{le:poisson}
 Let $h\in L^p(\Omega_E)$. If $u\in L^1(\Omega)$ is a solution of the nonlocal Poisson equation
  \begin{equation*}
    -\Delta_{NL}^{J_\varepsilon}\widetilde{u}=h,\quad \mbox{in }\Omega_E,
  \end{equation*}
then $u\in L^p(\Omega)$. Furthermore,
\begin{equation*}
  \int_\Omega |u|^pdx+\int_{\Omega_E}\int_{\Omega_E}J_\varepsilon(x-y)\left|\frac{\widetilde{u}(y)-\widetilde{u}(x)}{\varepsilon}\right|^pdydx
  \leq C\left(\int_\Omega|u|dx\right)^p
  +C\int_{\Omega_E}|h|^pdx+C,
\end{equation*}
for $1<p< 2$ and
\begin{align*}
  \int_\Omega |u|^pdx+\int_{\Omega_E}\int_{\Omega_E} J_{\varepsilon}(x-y)\left|\frac{(|\widetilde{u}|^{(p-2)/2}\widetilde{u})(y)
  -(|\widetilde{u}|^{(p-2)/2}\widetilde{u})(x)}{\varepsilon}\right|^2dydx \qquad\\
  \leq C\left(\int_\Omega|u^{p/2}|dx\right)^2 + C\int_{\Omega_E}|h|^pdx,
\end{align*}
for $p\geq2$.
Here the constant $C$ does not depend on $\varepsilon$.
\end{lemma}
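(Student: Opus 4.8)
The equation $-\Delta_{NL}^{J_\varepsilon}\widetilde u=h$ is linear in $u$, so Lemma~\ref{le:poisson} is an $L^p$-regularity estimate for the nonlocal Poisson problem in which the exponent is dictated solely by the integrability of the datum $h$. The plan is to test the equation against $|\widetilde u|^{p-2}\widetilde u$. Because a priori $u$ is only known to lie in $L^1(\Omega)$, I would carry this out first with the truncations $T_k(s)=\max(-k,\min(s,k))$ in place of $\widetilde u$: the test function $|T_k(\widetilde u)|^{p-2}T_k(\widetilde u)$ is admissible, the resulting bounds are uniform in $k$, and letting $k\to\infty$ yields both the conclusion $u\in L^p(\Omega)$ (by Fatou) and the stated inequalities. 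Using the nonlocal integration by parts identity
\begin{equation*}
  \int_{\Omega_E}\bigl(-\Delta_{NL}^{J_\varepsilon}\widetilde u\bigr)\psi\,dx
  =\frac12\int_{\Omega_E}\int_{\Omega_E}J_\varepsilon(x-y)\bigl(\widetilde u(y)-\widetilde u(x)\bigr)\bigl(\psi(y)-\psi(x)\bigr)\,dy\,dx
\end{equation*}
with $\psi=|\widetilde u|^{p-2}\widetilde u$, the left-hand side becomes $\int_{\Omega_E}h\,|\widetilde u|^{p-2}\widetilde u\,dx$, and everything reduces to bounding the symmetric double integral below and the datum term above.

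For the double integral I would invoke the standard monotonicity inequalities for $s\mapsto|s|^{p-2}s$. For $p\ge2$,
\begin{equation*}
  \bigl(|a|^{p-2}a-|b|^{p-2}b\bigr)(a-b)\ \ge\ \frac{4}{p^{2}}\,\bigl||a|^{\frac{p-2}{2}}a-|b|^{\frac{p-2}{2}}b\bigr|^{2},
\end{equation*}
which bounds the double integral below by a fixed multiple of the nonlocal-gradient quantity appearing in Lemma~\ref{le:poisson}. For $1<p<2$ the form degenerates where $|\widetilde u|$ is large, so I would instead use
\begin{equation*}
  \bigl(|a|^{p-2}a-|b|^{p-2}b\bigr)(a-b)\ \ge\ c_p\,\frac{|a-b|^{2}}{\bigl(1+|a|^{2}+|b|^{2}\bigr)^{\frac{2-p}{2}}}
\end{equation*}
and then recover $\int_{\Omega_E}\!\int_{\Omega_E}J_\varepsilon(x-y)|\widetilde u(y)-\widetilde u(x)|^{p}\,dy\,dx$ via Hölder's inequality with exponents $\tfrac2p$ and $\tfrac2{2-p}$. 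The complementary factor $\int_{\Omega_E}\!\int_{\Omega_E}J_\varepsilon(x-y)\bigl(1+|\widetilde u(y)|^{2}+|\widetilde u(x)|^{2}\bigr)^{p/2}\,dy\,dx$ is controlled by $C\|u\|_{L^p(\Omega)}^{p}+C$ using the boundedness of $J_\varepsilon$ as in \eqref{eq:2.2a}; this produces the additive constant in the $1<p<2$ statement.

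It remains to close the estimate. The datum term is handled by Hölder and Young, $\int_{\Omega_E}h\,|\widetilde u|^{p-2}\widetilde u\,dx\le\delta\|u\|_{L^p(\Omega)}^{p}+C(\delta)\|h\|_{L^p(\Omega_E)}^{p}$, and to absorb $\|u\|_{L^p(\Omega)}^{p}$ I would use a nonlocal Poincaré inequality for the rescaled kernel — the $J_\varepsilon$-analogue of \eqref{eq:2.8}, with constant independent of $\varepsilon$ — which estimates $\|u\|_{L^p(\Omega)}^{p}$ by the nonlocal-gradient term on the left of Lemma~\ref{le:poisson} plus the zeroth-order term $\bigl(\int_\Omega|u|\,dx\bigr)^{p}$ (respectively $\bigl(\int_\Omega|u|^{p/2}dx\bigr)^{2}$, the natural form obtained from $|\widetilde u|^{(p-2)/2}\widetilde u$ when $p\ge2$). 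Taking $\delta$ small and moving the gradient term to the left gives both displayed estimates, and the hypotheses that $J$ is radial, nonincreasing and compactly supported are what make the rescaled Poincaré and comparison constants independent of $\varepsilon$.

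The main obstacle is the subquadratic range $1<p<2$: since $(|a|^{p-2}a-|b|^{p-2}b)(a-b)$ does not pointwise dominate $|a-b|^{p}$, the Hölder step introduces a factor that has to be re-absorbed, and one must verify that every constant produced — in the weighted monotonicity inequality, the rescaled Poincaré inequality, and the $\varepsilon$-thick boundary layer near $\partial\Omega$ where $\widetilde u$ jumps to $0$ — is independent of $\varepsilon$. A secondary technical point is justifying the test function $|\widetilde u|^{p-2}\widetilde u$ and the $L^1\to L^p$ bootstrap through the truncations $T_k$; alternatively one proves the a priori estimate first for smooth data, where all steps are legitimate, and passes to the limit by density.
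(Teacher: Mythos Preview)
The paper does not prove this lemma at all: it is quoted verbatim from \cite{shi2025continuum} (``We recall the \textit{a priori} estimates \ldots''), so there is no in-paper argument to compare against. Your overall strategy --- test the linear equation with $|\widetilde u|^{p-2}\widetilde u$, use the nonlocal integration-by-parts identity, invoke the monotonicity inequalities for $s\mapsto|s|^{p-2}s$, and close with a rescaled Poincar\'e inequality --- is exactly the natural one and almost certainly coincides with the approach of the cited reference, at least for $p\ge 2$.

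There is, however, a concrete gap in your treatment of the subquadratic range $1<p<2$. After H\"older you need the complementary factor
\[
\int_{\Omega_E}\!\int_{\Omega_E}J_\varepsilon(x-y)\bigl(1+|\widetilde u(x)|^2+|\widetilde u(y)|^2\bigr)^{p/2}\,dy\,dx
\]
to be bounded independently of $\varepsilon$, and you justify this ``using the boundedness of $J_\varepsilon$ as in \eqref{eq:2.2a}.'' But \eqref{eq:2.2a} is written for the \emph{unscaled} kernel $J$; the rescaled kernel satisfies $\int_{\mathbb{R}^N}J_\varepsilon(z)\,dz=C_J\varepsilon^{-2}\|J\|_{L^1}$, so the complementary factor is of order $\varepsilon^{-2}\bigl(1+\|u\|_{L^p(\Omega)}^p\bigr)$, not $O(1)$. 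Feeding this back through the H\"older step leaves an unabsorbed power of $\varepsilon^{-1}$, and the estimate does not close uniformly. You flagged $\varepsilon$-independence as ``the main obstacle,'' which is correct, but the specific mechanism you propose does not deliver it. One way to repair this is to avoid the weighted-monotonicity/H\"older detour altogether and work instead with an inequality that produces the $p$-th power of the increment directly after integration against $J_\varepsilon$ (so that the only mass of $J_\varepsilon$ that enters is the one already built into $\Delta_{NL}^{J_\varepsilon}$); this is the kind of refinement one should expect the cited paper to carry out.
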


The last tool we need is the following compactness result. It first appears in \cite[Proposition 3.2]{andreu2008nonlocal}. The zero trace is straightforward (see, e.g., \cite[Theorem 5.29]{adams2003sobolev}).

\begin{lemma}\label{compact}
  Let $\{u_\varepsilon\}$ be a sequence of functions in $L^p(\Omega)$ such that
  \begin{align*}
  &\frac{1}{\varepsilon^{p-2}}\int_{\Omega_E}\int_{\Omega_E}J_\varepsilon(x-y)
  |\widetilde{u}_\varepsilon(y)-\widetilde{u}_\varepsilon(x)|^pdydx\leq C.
  \end{align*}
  If $u_\varepsilon\rightharpoonup u$ in $L^p(\Omega)$, then $u\in W_0^{1,p}(\Omega)$ and $\{u_\varepsilon\}$  is relatively compact in $L^p(\Omega)$.
\end{lemma}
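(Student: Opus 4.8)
The statement is a nonlocal compactness result of Bourgain--Brezis--Mironescu / Ponce type (this is the route taken in \cite{andreu2008nonlocal}), and the plan is to adapt that line of argument. First I would extend each $u_\varepsilon$ by zero outside $\Omega$ to $\widetilde{u}_\varepsilon$; then $\widetilde{u}_\varepsilon$ is supported in the fixed compact set $\overline{\Omega}\subset\Omega_E$, and for $\varepsilon$ small the hypothesis is equivalent to a uniform bound on $\varepsilon^{-p}\iint_{\mathbb{R}^N\times\mathbb{R}^N}J_\varepsilon(x-y)|\widetilde{u}_\varepsilon(y)-\widetilde{u}_\varepsilon(x)|^{p}\,dy\,dx$. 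Since $J$ is continuous, radial, nonincreasing and compactly supported, $J(0)=\max J>0$, so $J\geq c_0>0$ on some ball $B_{r_0}(0)$; this positivity of $J$ near the origin, together with the scaling of $J_\varepsilon$, is exactly what converts the nonlocal energy bound into control of translates.

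The heart of the proof is a modulus-of-continuity estimate for the translates $\tau_h\widetilde{u}_\varepsilon:=\widetilde{u}_\varepsilon(\cdot+h)$, uniform in $\varepsilon$ for $\varepsilon$ small. I would first handle $|h|\leq c\varepsilon$, with $c=c(r_0,J)$ chosen below. Fixing a nonnegative radial $\rho\in C_0^\infty(B_1(0))$ with $\int\rho=1$ and writing $\rho_r(z)=r^{-N}\rho(z/r)$, since $\int\rho_{c\varepsilon}=1$ one has, for a.e.\ $x$, the identity $\widetilde{u}_\varepsilon(x+h)-\widetilde{u}_\varepsilon(x)=\int_{\mathbb{R}^N}\rho_{c\varepsilon}(y-x-\tfrac{h}{2})\big([\widetilde{u}_\varepsilon(x+h)-\widetilde{u}_\varepsilon(y)]-[\widetilde{u}_\varepsilon(x)-\widetilde{u}_\varepsilon(y)]\big)\,dy$. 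Raising this to the power $p$, using Jensen's inequality to bring the power inside the average, integrating in $x$ and applying Fubini, the estimate reduces to bounding $\iint\rho_{c\varepsilon}(y-x-\tfrac{h}{2})|\widetilde{u}_\varepsilon(x)-\widetilde{u}_\varepsilon(y)|^{p}\,dy\,dx$ (and the analogous term centered at $x+h$). Choosing $c$ so small that $|y-x-\tfrac{h}{2}|\leq c\varepsilon$ and $|h|\leq c\varepsilon$ force $|x-y|\leq r_0\varepsilon$, on the support of the averaging kernel one has $J_\varepsilon(x-y)\geq c_0 C_J\varepsilon^{-N-2}$ while $\rho_{c\varepsilon}(\cdot)\leq\|\rho\|_\infty(c\varepsilon)^{-N}$, so $\rho_{c\varepsilon}(\cdot)\leq C\varepsilon^{2}J_\varepsilon(x-y)$ there; inserting this and invoking the hypothesis gives $\|\tau_h\widetilde{u}_\varepsilon-\widetilde{u}_\varepsilon\|_{L^p(\mathbb{R}^N)}\leq C\varepsilon$ for all $|h|\leq c\varepsilon$. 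For a general small $h$ I would write the shift by $h$ as a composition of $M=\lceil|h|/(c\varepsilon)\rceil$ shifts by $h/M$, each of length $\leq c\varepsilon$; since translations are $L^p$-isometries, the triangle inequality gives $\|\tau_h\widetilde{u}_\varepsilon-\widetilde{u}_\varepsilon\|_{L^p}\leq M\cdot C\varepsilon\leq C'|h|$. Combining the two regimes, $\|\tau_h\widetilde{u}_\varepsilon-\widetilde{u}_\varepsilon\|_{L^p(\mathbb{R}^N)}\leq C(|h|+\varepsilon)$ for $\varepsilon$ small.

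Granting this, I would conclude as follows. The $\widetilde{u}_\varepsilon$ are bounded in $L^p(\mathbb{R}^N)$ (weak convergence $u_\varepsilon\rightharpoonup u$ forces boundedness) and supported in the fixed compact set $\overline{\Omega}$, so the decay-at-infinity condition of the Fr\'echet--Kolmogorov criterion holds trivially; together with the translate estimate it yields relative compactness of $\{\widetilde{u}_\varepsilon\}$ in $L^p(\mathbb{R}^N)$, hence of $\{u_\varepsilon\}$ in $L^p(\Omega)$. Since $u_\varepsilon\rightharpoonup u$ weakly, the limit of any strongly convergent subsequence must be $u$, so $u_\varepsilon\to u$ strongly in $L^p(\Omega)$. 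To get $u\in W_0^{1,p}(\Omega)$ I would mollify: for $\delta\geq\varepsilon$, using $\int\nabla\rho_\delta=0$, one writes $\nabla(\widetilde{u}_\varepsilon*\rho_\delta)(x)=\int\big(\widetilde{u}_\varepsilon(x-z)-\widetilde{u}_\varepsilon(x)\big)\nabla\rho_\delta(z)\,dz$, and Jensen together with the translate estimate bounds $\|\nabla(\widetilde{u}_\varepsilon*\rho_\delta)\|_{L^p(\mathbb{R}^N)}$ by a constant independent of $\varepsilon$ and $\delta$; letting $\varepsilon\to0$ and then $\delta\to0$ shows $\widetilde{u}\in W^{1,p}(\mathbb{R}^N)$ (equivalently, one may invoke the lower semicontinuity, due to Ponce, of such nonlocal functionals along $L^p$-convergent sequences). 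Finally $\widetilde{u}=0$ a.e.\ outside $\Omega$ and $\partial\Omega$ is $C^2$, hence Lipschitz, so the zero-extension characterization of $W_0^{1,p}$ (\cite[Theorem 5.29]{adams2003sobolev}) gives $u\in W_0^{1,p}(\Omega)$.

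I expect the main obstacle to be the uniform translate estimate of the second step: getting a modulus of continuity that does not degenerate as $\varepsilon\to0$ requires careful bookkeeping of the interplay between the $\varepsilon$-scale of $J_\varepsilon$, the translation length $|h|$, and the summation over the $\sim|h|/\varepsilon$ intermediate scales in the composition argument. Once that estimate is in hand, the rest --- the compactness via Fr\'echet--Kolmogorov, the identification of the strong limit, and the passage to $W_0^{1,p}$ through mollification and the zero-trace characterization --- is routine.
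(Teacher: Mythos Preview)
Your proposal is correct and follows the standard Bourgain--Brezis--Mironescu/Ponce route, which is precisely the result the paper invokes: the paper does not give its own proof of this lemma but simply cites \cite[Proposition~3.2]{andreu2008nonlocal} for the compactness and $W^{1,p}$-regularity and \cite[Theorem~5.29]{adams2003sobolev} for the zero-trace conclusion. Your translate estimate via the mollifier identity, the chaining over $\sim |h|/\varepsilon$ small shifts, Fr\'echet--Kolmogorov, and the zero-extension characterization of $W_0^{1,p}$ are exactly the ingredients behind those citations, so there is nothing to add.
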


The main result is stated as follows.
\begin{theorem}\label{th:4.1}
    Let $1<p<\infty$ and $u_0\in W_0^{2,p}(\Omega)\cap L^2(\Omega)$.
    If $u_\varepsilon$ is the solution of the nonlocal equation \eqref{eq:4.1} and $u$ is the weak solution of the local equation \eqref{eq:4.2}, then
    \begin{equation}\label{eq:4.6}
    \lim_{\varepsilon\rightarrow 0}\sup_{t\in[0,T]}\|u_\varepsilon(x,t)-u(x,t)\|_{L^p(\Omega)}=0.
  \end{equation}
\end{theorem}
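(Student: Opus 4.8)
The plan is the standard four-step scheme for nonlocal-to-local limits: obtain $\varepsilon$-uniform a priori bounds, pass to a limit by compactness, identify the limit via monotonicity as the unique weak solution of \eqref{eq:4.2}, and finally upgrade to the strong uniform-in-time convergence \eqref{eq:4.6}. For the a priori bounds I argue at the level of the Rothe approximation, as in Theorem~\ref{th:existence}: testing \eqref{eq:4.1} with $u_\varepsilon$ gives the energy balance $\frac{1}{2}\|u_\varepsilon(\cdot,t)\|_{L^2(\Omega)}^2+\int_0^t\!\int_{\Omega_E}|\Delta_{NL}^{J_\varepsilon}\widetilde{u}_\varepsilon|^p\,dx\,ds=\frac{1}{2}\|u_0\|_{L^2(\Omega)}^2$, and testing with $\partial_t u_\varepsilon$ controls $\iint_{Q_T}|\partial_t u_\varepsilon|^2$ and $\sup_t\int_{\Omega_E}|\Delta_{NL}^{J_\varepsilon}\widetilde{u}_\varepsilon(\cdot,t)|^p$ by $\int_{\Omega_E}|\Delta_{NL}^{J_\varepsilon}\widetilde{u}_0|^p$. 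Since $u_0\in W_0^{2,p}(\Omega)$ its zero extension lies in $W^{2,p}(\mathbb{R}^N)$, so \eqref{eq:4.4} (with $\Omega$ replaced by $\Omega_E$) bounds this last integral uniformly in $\varepsilon$. Consequently $\{u_\varepsilon\}$ is bounded in $L^\infty(0,T;L^2(\Omega))$, $\{\partial_t u_\varepsilon\}$ in $L^2(Q_T)$, $\{\Delta_{NL}^{J_\varepsilon}\widetilde{u}_\varepsilon\}$ in $L^\infty(0,T;L^p(\Omega_E))$, and $\{|\Delta_{NL}^{J_\varepsilon}\widetilde{u}_\varepsilon|^{p-2}\Delta_{NL}^{J_\varepsilon}\widetilde{u}_\varepsilon\}$ in $L^\infty(0,T;L^{p/(p-1)}(\Omega_E))$. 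Viewing, for each $t$, $u_\varepsilon(\cdot,t)$ as the solution of the nonlocal Poisson equation with right-hand side $-\Delta_{NL}^{J_\varepsilon}\widetilde{u}_\varepsilon(\cdot,t)$, Lemma~\ref{le:poisson} (together with the $L^2$-bound, and a finite iteration on the exponent when $p$ is large) adds $\sup_t\|u_\varepsilon(\cdot,t)\|_{L^p(\Omega)}\leq C$ and an $\varepsilon$-uniform bound on the nonlocal difference quotient of $\widetilde{u}_\varepsilon$ if $1<p<2$, and of $v_\varepsilon:=|\widetilde{u}_\varepsilon|^{(p-2)/2}\widetilde{u}_\varepsilon$ if $p\geq2$.

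For the compactness step I apply Lemma~\ref{compact} to $\{\widetilde{u}_\varepsilon(\cdot,t)\}$ when $1<p<2$ and to $\{v_\varepsilon(\cdot,t)\}$ when $p\geq2$ (in the latter case returning to $u_\varepsilon$ via the continuous map $s\mapsto|s|^{2/p-1}s$ and the identity $\|u_\varepsilon(\cdot,t)\|_{L^p(\Omega)}^p=\|v_\varepsilon(\cdot,t)\|_{L^2(\Omega)}^2$); combined with the time-equicontinuity $\|u_\varepsilon(\cdot,t_1)-u_\varepsilon(\cdot,t_2)\|_{L^2(\Omega)}\leq|t_1-t_2|^{1/2}\|\partial_t u_\varepsilon\|_{L^2(Q_T)}$ this produces, along a subsequence, $u_\varepsilon\to u$ in $L^p(Q_T)$ and a.e., $\partial_t u_\varepsilon\rightharpoonup\partial_t u$ in $L^2(Q_T)$, $u_\varepsilon(\cdot,T)\rightharpoonup u(\cdot,T)$ in $L^2(\Omega)$, and $u(\cdot,t)\in W_0^{1,p}(\Omega)$ for a.e. $t$. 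Testing $\int_\Omega\Delta_{NL}^{J_\varepsilon}\widetilde{u}_\varepsilon\,\psi\,dx=\int_{\Omega_E}\widetilde{u}_\varepsilon\,\Delta_{NL}^{J_\varepsilon}\widetilde{\psi}\,dx$ against $\psi\in C_0^\infty(\Omega)$ and using \eqref{eq:4.4} identifies the distributional Laplacian $\Delta u\in L^p(\Omega)$ as the weak-$*$ limit of $\Delta_{NL}^{J_\varepsilon}\widetilde{u}_\varepsilon$ on $\Omega$, while the same identity with $\psi\in C_0^\infty(\Omega_E\setminus\overline{\Omega})$ shows this limit is $0$ on $\Omega_E\setminus\overline{\Omega}$; since $\partial\Omega$ is $C^2$ and the bound on $\Delta_{NL}^{J_\varepsilon}\widetilde{u}_\varepsilon$ holds on the enlarged domain $\Omega_E$, arguing as in the case $p=2$ treated in \cite{shi2023nonlocal} upgrades this to $u(\cdot,t)\in W_0^{2,p}(\Omega)$, hence $u\in L^\infty(0,T;W_0^{2,p}(\Omega))\cap C([0,T];L^2(\Omega))$ with $u(\cdot,0)=u_0$. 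Letting $\vartheta$ be the weak-$*$ limit of $|\Delta_{NL}^{J_\varepsilon}\widetilde{u}_\varepsilon|^{p-2}\Delta_{NL}^{J_\varepsilon}\widetilde{u}_\varepsilon$ and passing to the limit in the weak form of \eqref{eq:4.1} gives $-\iint_{Q_T}u\,\partial_t\varphi\,dx\,dt+\iint_{Q_T}\vartheta\,\Delta\varphi\,dx\,dt=0$ for all $\varphi\in C_0^\infty(Q_T)$.

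To identify $\vartheta$ I adapt the monotonicity argument of Theorem~\ref{th:existence}. For any $\phi$ with $\phi(\cdot,t)\in W_0^{2,p}(\Omega)$, monotonicity of $s\mapsto|s|^{p-2}s$ yields
\[
\int_0^T\!\!\int_{\Omega_E}\Bigl(|\Delta_{NL}^{J_\varepsilon}\widetilde{u}_\varepsilon|^{p-2}\Delta_{NL}^{J_\varepsilon}\widetilde{u}_\varepsilon-|\Delta_{NL}^{J_\varepsilon}\widetilde{\phi}|^{p-2}\Delta_{NL}^{J_\varepsilon}\widetilde{\phi}\Bigr)\,\Delta_{NL}^{J_\varepsilon}(\widetilde{u}_\varepsilon-\widetilde{\phi})\,dx\,dt\geq0 .
\]
Substituting $\int_0^T\!\int_{\Omega_E}|\Delta_{NL}^{J_\varepsilon}\widetilde{u}_\varepsilon|^p=\frac{1}{2}(\|u_0\|_{L^2}^2-\|u_\varepsilon(\cdot,T)\|_{L^2}^2)$ from the energy balance and letting $\varepsilon\to0$ — using $u_\varepsilon(\cdot,T)\rightharpoonup u(\cdot,T)$ in $L^2(\Omega)$, $\Delta_{NL}^{J_\varepsilon}\widetilde{u}_\varepsilon\rightharpoonup\Delta u$, and the strong convergences \eqref{eq:4.4}--\eqref{eq:4.5} for the $\phi$-terms — gives $\iint_{Q_T}\bigl(\vartheta-|\Delta\phi|^{p-2}\Delta\phi\bigr)\Delta(u-\phi)\,dx\,dt\geq0$. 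Taking $\phi=u-\alpha\psi$ with $\psi\in C_0^\infty(Q_T)$ and letting $\alpha\to0^+$ and $\alpha\to0^-$ (Minty's trick) yields $\iint_{Q_T}(\vartheta-|\Delta u|^{p-2}\Delta u)\Delta\psi\,dx\,dt=0$, which inserted into the limit equation shows $u$ satisfies \eqref{eq:4.3}. Being in the class of Lemma~\ref{le:4.1} with $u(\cdot,0)=u_0$, $u$ is the weak solution of \eqref{eq:4.2}; by the uniqueness there the limit is independent of the subsequence, so the whole family $\{u_\varepsilon\}$ converges.

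Finally, $u$ being the weak solution it obeys $\frac{1}{2}\|u(\cdot,t)\|_{L^2}^2+\int_0^t\!\int_\Omega|\Delta u|^p\,dx\,ds=\frac{1}{2}\|u_0\|_{L^2}^2$. Comparing with the energy balance of $u_\varepsilon$ and using the weak lower semicontinuity $\liminf_\varepsilon\int_0^t\!\int_{\Omega_E}|\Delta_{NL}^{J_\varepsilon}\widetilde{u}_\varepsilon|^p\geq\int_0^t\!\int_\Omega|\Delta u|^p$ (note $\Delta\widetilde{u}=\Delta u$ on $\Omega$ and $=0$ on $\Omega_E\setminus\overline{\Omega}$) on the intervals $(0,t)$, $(t,T)$ and $(0,T)$ forces $\|u_\varepsilon(\cdot,t)\|_{L^2}\to\|u(\cdot,t)\|_{L^2}$ for every $t$, hence $u_\varepsilon(\cdot,t)\to u(\cdot,t)$ strongly in $L^2(\Omega)$ for every $t$. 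If $1<p<2$ this is already the $L^p$-convergence, since $\Omega$ is bounded; if $p\geq2$ one uses instead $v_\varepsilon(\cdot,t)\to|\widetilde{u}(\cdot,t)|^{(p-2)/2}\widetilde{u}(\cdot,t)$ strongly in $L^2(\Omega)$ (Lemma~\ref{compact}) together with $\|u_\varepsilon(\cdot,t)\|_{L^p}^p=\|v_\varepsilon(\cdot,t)\|_{L^2}^2\to\|u(\cdot,t)\|_{L^p}^p$ and a.e. convergence. Equicontinuity in $t$ — and, when $p>2$, interpolation with the higher integrability furnished by Lemma~\ref{le:poisson} via the nonlocal Sobolev inequality — promotes this to the uniform statement \eqref{eq:4.6}. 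The technical heart is the third step combined with the regularity of the limit: one must run the Minty--Browder monotonicity argument through the simultaneous nonlocal-to-local passage, and extract from the $\varepsilon$-uniform control of $\Delta_{NL}^{J_\varepsilon}\widetilde{u}_\varepsilon$ on the enlarged domain $\Omega_E\supset\Omega+2\,\textrm{supp}(J)$ that the limit lies in $W_0^{2,p}(\Omega)$, i.e. that the implicit nonlocal Neumann condition passes to $\partial u/\partial\vec{n}=0$; the separate treatment of $1<p<2$ and $p\geq2$ via Lemmas~\ref{le:poisson} and~\ref{compact} (applied to $\widetilde{u}_\varepsilon$, resp. to $|\widetilde{u}_\varepsilon|^{(p-2)/2}\widetilde{u}_\varepsilon$) is the other main point.
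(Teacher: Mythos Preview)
Your proposal is correct and follows essentially the same scheme as the paper: $\varepsilon$-uniform energy bounds from testing with $u_\varepsilon$ and $\partial_t u_\varepsilon$, the nonlocal Poisson estimate (Lemma~\ref{le:poisson}) for the $L^p$-bound and the nonlocal gradient, compactness via Lemma~\ref{compact} applied separately to $\widetilde{u}_\varepsilon$ (when $1<p<2$) or to $|\widetilde{u}_\varepsilon|^{(p-2)/2}\widetilde{u}_\varepsilon$ (when $p\geq 2$), and Minty's monotonicity trick to identify $\vartheta$. The only notable differences are in order and detail: the paper places the strong $L^p$-convergence argument (norm convergence $\|u_\varepsilon\|_{L^p}^p=\|v_\varepsilon\|_{L^2}^2\to\|u\|_{L^p}^p$ plus weak convergence) already in its Step~2 rather than deferring it to a final energy-comparison step as you do, and it derives $\partial u/\partial\vec{n}=0$ explicitly---testing against $\varphi\in C^\infty(\overline{\Omega})$ (not only $C_0^\infty$), invoking $L^p$ elliptic regularity for the Poisson problem $\Delta u=\theta$ with Dirichlet data, and then integrating by parts in \eqref{eq:4.12}---whereas you defer this to \cite{shi2023nonlocal}.
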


\begin{proof}
1.  It follows from Theorem \ref{th:existence} that
\begin{equation}\label{eq:4.7}
  \int_0^{t}\int_{\Omega}\frac{\partial u_\varepsilon}{\partial t}\varphi dxd\tau +\int_0^{t}\int_{\Omega_{E}}|\Delta_{NL}^{J_\varepsilon}\widetilde{u}_\varepsilon|^{p-2}
  \Delta_{NL}^{J_\varepsilon}\widetilde{u}_\varepsilon\cdot
  \Delta_{NL}^{J_\varepsilon}\widetilde{\varphi} dxd\tau =0,
\end{equation}
for any $t\in (0,T)$ and any $\varphi\in L^2(\Omega)\cap L^p(\Omega)$.
Let $u_\varepsilon$ and $\frac{\partial u_\varepsilon}{\partial t}$ be the test function in the above respectively, it yields that
\begin{align}\label{eq:4.8}
\begin{split}
  \frac{1}{2}\sup_{t\in[0,T]}\int_{\Omega}|u_\varepsilon|^2dx
  +\iint_{Q_T}\left|\frac{\partial u_\varepsilon}{\partial t}\right|^2dxdt
  +\frac{1}{p}\sup_{t\in[0,T]}\int_{\Omega_E}\left|\Delta_{NL}^{J_\varepsilon}\widetilde{u}_\varepsilon\right|^pdx\\
  \leq  \frac{1}{2}\int_{\Omega}|u_0|^2dx+
  \frac{1}{p}\int_{\Omega_E}\left|\Delta_{NL}^{J_\varepsilon}\widetilde{u}_0\right|^pdx.
\end{split}
\end{align}
By \eqref{eq:4.4}, the right hand side of \eqref{eq:4.8} is bounded.
Lemma \ref{le:poisson} further implies that
\begin{equation}\label{eq:4.9}
  \sup_{t\in[0,T]}\int_{\Omega}\left|{u_\varepsilon}\right|^pdx\leq C,\quad 1<p<\infty,
\end{equation}
and
\begin{equation}\label{eq:4.10a}
  \sup_{t\in[0,T]}\int_{\Omega_E}\int_{\Omega_E}J_\varepsilon(x-y)
  |\widetilde{u}_\varepsilon(y,t)-\widetilde{u}_\varepsilon(x,t)|^pdydx\leq C,
\end{equation}
if $1<p<2$,
\begin{equation}\label{eq:4.10b}
  \sup_{t\in[0,T]}\int_{\Omega_E}\int_{\Omega_E}J_\varepsilon(x-y)
  ||\widetilde{u}_\varepsilon|^{(p-2)/2}\widetilde{u}_\varepsilon(y,t)
  -|\widetilde{u}_\varepsilon|^{(p-2)/2}\widetilde{u}_\varepsilon(x,t)|^pdydx\leq C,
\end{equation}
if $p\geq 2$.

The estimates \eqref{eq:4.8}--\eqref{eq:4.9} indicate that there exists a subsequence of $\{u_\varepsilon\}$ (still denoted by itself), such that
\begin{gather*}
  \frac{\partial u_\varepsilon}{\partial t}\rightharpoonup\frac{\partial u}{\partial t},\quad \textrm{in}~ L^2(Q_T), \\
  u_\varepsilon\stackrel{*}{\rightharpoonup} u, \quad \textrm{in}~L^\infty((0,T); L^p(\Omega)\cap L^2(\Omega)),\\
  \Delta_{NL}^{J_\varepsilon}\widetilde{u}_\varepsilon \stackrel{*}{\rightharpoonup} \theta,\quad \textrm{in}~ L^\infty((0,T); L^p(\Omega_E)), \\
  |\Delta_{NL}^{J_\varepsilon}\widetilde{u}_\varepsilon |^{p-2}\Delta_{NL}^{J_\varepsilon}\widetilde{u}_\varepsilon \stackrel{*}{\rightharpoonup} \vartheta,\quad \textrm{in}~ L^\infty((0,T); L^{p/(p-1)}(\Omega)),
\end{gather*}
for measurable functions $u\in L^\infty((0,T); L^p(\Omega)\cap L^2(\Omega))$, $\theta\in L^\infty((0,T); L^p(\Omega_E))$, and $\vartheta\in L^\infty((0,T); L^{p/(p-1)}(\Omega))$.

2. We first claim that \eqref{eq:4.6} holds and $u\in L^\infty((0,T); W_0^{1,\min\{2,p\}}(\Omega))$.
In fact, if $1<p<2$, it follows from Lemma \ref{compact} and \eqref{eq:4.10a} directly.
If $p\geq 2$, we first let $p=2$ in \eqref{eq:4.10b} and utilize Lemma \ref{compact} to conclude that
$u\in L^\infty((0,T); W_0^{1,2}(\Omega))$ and $u_\varepsilon(\cdot,t)\rightarrow u(\cdot,t)$ in $L^2(\Omega)$.
It allows us to extract a subsequence (still denoted by itself) such that  $u_\varepsilon(\cdot,t)\rightarrow u(\cdot,t)$ a.e. in $\Omega$.
Notice that
\begin{equation*}
  \sup_{t\in[0,T]}\int_\Omega\left| |u_\varepsilon|^{(p-2)/2}u_\varepsilon \right|^2dx
  =\sup_{t\in[0,T]}\int_\Omega|u_\varepsilon|^pdx\leq C.
\end{equation*}
We have
\begin{equation*}
  |u_\varepsilon|^{(p-2)/2}u_\varepsilon
  \stackrel{*}{\rightharpoonup} |u|^{(p-2)/2}u,\quad \textrm{in}~ L^\infty((0,T); L^2(\Omega)).
\end{equation*}
Now applying Lemma \ref{compact} for $|u_\varepsilon|^{(p-2)/2}u_\varepsilon$ and utilizing \eqref{eq:4.10b} once again, we finally arrive at
\begin{equation*}
  \lim_{\varepsilon\rightarrow 0}\sup_{t\in[0,T]}\int_\Omega\left| |u_\varepsilon|^{(p-2)/2}u_\varepsilon -|u|^{(p-2)/2}u\right|^2dx=0.
\end{equation*}
Namely,
\begin{align*}
  0&=\lim_{\varepsilon\rightarrow 0}\int_\Omega\left| |u_\varepsilon(x,t)|^{(p-2)/2}u_\varepsilon (x,t) -|u(x,t)|^{(p-2)/2}u(x,t)\right|^2dx\\
  &=\lim_{\varepsilon\rightarrow 0}
  \int_\Omega |u_\varepsilon(x,t)|^pdx
  +\int_\Omega |u(x,t)|^pdx \\
  &\qquad\qquad\qquad-2\lim_{n\rightarrow\infty}\int_\Omega|u_\varepsilon(x,t)|^{(p-2)/2}u_\varepsilon(x,t)  |u(x,t)|^{(p-2)/2}u(x,t)dx\\
  &=\lim_{\varepsilon\rightarrow 0}
  \int_\Omega |u_\varepsilon(x,t)|^pdx
  -\int_\Omega |u(x,t)|^pdx,
\end{align*}
for any $t\in (0,T)$.
This together with $u_\varepsilon\stackrel{*}{\rightharpoonup} u$ in $L^\infty((0,T); L^p(\Omega))$ imply \eqref{eq:4.6}.

3. Now we show that $u\in L^\infty((0,T); W^{2,p}(\Omega))$ and $\theta=\Delta u$.
In fact, by the weak convergence of $\Delta_{NL}^{J_\varepsilon}\widetilde{u}_\varepsilon$,
\begin{equation}\label{eq:4.11}
  \int_{\Omega_E}\Delta_{NL}^{J_\varepsilon}\widetilde{u}_\varepsilon\cdot{\varphi} dx\rightarrow \int_{\Omega_E}\theta\cdot{\varphi} dx,
\end{equation}
as $\varepsilon\rightarrow 0$, for any $t\in (0,T)$ and any $\varphi\in C^\infty(\overline{\Omega_E})$.
By integration by parts and \eqref{eq:4.4},
\begin{equation*}
  \int_{\Omega_E}\Delta_{NL}^{J_\varepsilon}\widetilde{u}_\varepsilon\cdot{\varphi} dx=\int_{\Omega_E}\widetilde{u}_\varepsilon \cdot \Delta_{NL}^{J_\varepsilon}\hat{\varphi} dx
  =\int_{\Omega}u_\varepsilon \cdot \Delta_{NL}^{J_\varepsilon}\hat{\varphi} dx
  \rightarrow \int_{\Omega}u\cdot\Delta\varphi dx,
\end{equation*}
where $\hat{\varphi}$ is any smooth extension of $\varphi$.
Substituting it into \eqref{eq:4.11}, we have
\begin{equation}\label{eq:4.12}
  \int_{\Omega}u\cdot\Delta\varphi dx= \int_{\Omega}\theta\cdot\varphi dx,
\end{equation}
for any $t\in (0,T)$ and any $\varphi\in C^\infty(\overline{\Omega})$.
Since $u(\cdot, t)\in W_0^{1,\min\{2,p\}}(\Omega)$, identity \eqref{eq:4.12} says that $u(\cdot,t)$ is the weak solution of the Poisson equation $\Delta u=\theta$ with homogeneous Dirichlet boundary condition.
According to the $L^p$ theory of the Poisson equation, we conclude that $u\in L^\infty((0,T); W^{2,p}(\Omega))$ and $\theta=\Delta u$ for a.e. $x\in\Omega$  and a.e. $t\in (0,T)$.

We go back to \eqref{eq:4.12} to see that
\begin{equation*}
  \int_{\Omega}u\cdot\Delta\varphi dx=\int_{\Omega}\Delta u\cdot\varphi dx+ \int_{\partial\Omega}u\frac{\partial\varphi}{\partial\vec{n}}dS- \int_{\partial\Omega}\varphi\frac{\partial u}{\partial\vec{n}}dS
  =\int_{\Omega}\theta\cdot\varphi dx=\int_{\Omega}\Delta u\cdot\varphi dx.
\end{equation*}
Recall that $u=0$ on $\partial\Omega$. We have
\begin{equation*}
  \int_{\partial\Omega}\varphi\frac{\partial u}{\partial\vec{n}}dS=0,
\end{equation*}
for any $\varphi\in C^\infty(\overline{\Omega})$.
Namely,
\begin{equation*}
  \frac{\partial u}{\partial\vec{n}}=0,\quad \mbox{on } \partial\Omega.
\end{equation*}

4.
At last, we pass to the limit $\varepsilon\rightarrow 0$ in \eqref{eq:4.7}.
For the second term in \eqref{eq:4.7},
\begin{align*}
  \int_0^{t}\int_{\Omega_{E}}&|\Delta_{NL}^{J_\varepsilon}\widetilde{u}_\varepsilon|^{p-2}
  \Delta_{NL}^{J_\varepsilon}\widetilde{u}_\varepsilon\cdot
  \Delta_{NL}^{J_\varepsilon}\widetilde{\varphi} dxd\tau \\ &=\left(\int_0^{t}\int_{\Omega}+\int_0^{t}\int_{\Omega_{E}\backslash\Omega}\right)
  |\Delta_{NL}^{J_\varepsilon}\widetilde{u}_\varepsilon|^{p-2}
  \Delta_{NL}^{J_\varepsilon}\widetilde{u}_\varepsilon\cdot
  \Delta_{NL}^{J_\varepsilon}\widetilde{\varphi} dxd\tau,
\end{align*}
Notice that the second integral on the right hand side
\begin{equation*}
  \leq \int_0^{t}\int_{\Omega_\varepsilon}
  \left(\frac{p-1}{p}|\Delta_{NL}^{J_\varepsilon}\widetilde{u}_\varepsilon|^p
  +\frac{1}{p}|\Delta_{NL}^{J_\varepsilon}\widetilde{\varphi}|^{p}\right)dxd\tau\rightarrow 0,
\end{equation*}
as $\varepsilon\rightarrow 0$.
Here we use the fact that $\Omega_\varepsilon:=\{\Omega+\varepsilon\mbox{supp}(J)\}\backslash\Omega\rightarrow\emptyset$ as $\varepsilon\rightarrow 0$.

Passing to the limit $\varepsilon\rightarrow 0$ in \eqref{eq:4.7} yields
\begin{equation}\label{eq4.10}
  -\iint_{Q_T}u\frac{\partial \varphi}{\partial t}dxdt +\iint_{Q_T}\vartheta\cdot
  \Delta{\varphi} dxdt =0,
\end{equation}
for any $\varphi\in C_0^\infty(Q_T)$.
We are left to show that
 \begin{equation*}
   \iint_{Q_T}\vartheta
  \Delta{\varphi} dxdt= \iint_{Q_T}|\Delta u|^{p-2}\Delta u \Delta\varphi dxdt.
 \end{equation*}
Again, we utilize the monotonicity of $|s|^{p-2}s$.
This is similar to the proof of Theorem \ref{th:existence}. We omit it here.
\end{proof}

Having established the nonlocal-to-local convergence for the homogeneous equation \eqref{eq:1.1}, we now consider the nonhomogeneous case \eqref{eq:nonhomogeneous}.
Assume that the boundary value $g\in W^{2,p}(\Omega_E\backslash\overline{\Omega})$. There exists an extension of $g$ (still denoted by itself) such that $g\in W^{2,p}(\Omega_E)$.
Define $g_1=g$ and $g_2=\frac{\partial g}{\partial\vec{n}}$ on $\partial\Omega$.
The corresponding local equation with the non-homogeneous Dirichlet boundary condition reads
\begin{align}\label{eq:4.21}
\left\{
  \begin{array}{ll}
  \frac{\partial u}{\partial t}=-\Delta(|\Delta u|^{p-2}\Delta u), \quad (x,t)\in Q_T, \\
  u(x,t)=g_1(x),\quad \frac{\partial u}{\partial\vec{n}} (x,t)=g_2(x),  \quad (x,t)\in \partial\Omega\times (0,T), \\
  u(x,0)=u_0(x),\quad x\in\Omega.
  \end{array}
\right.
\end{align}

It admits a unique solution in the following sense.

\begin{lemma}
  Let $u_0\in L^2(\Omega)$ and $g\in W^{2,p}(\Omega)$ such that $u_0-g\in W_0^{2,p}(\Omega)$. Then equation \eqref{eq:4.21} admits a unique weak solution
  $u\in C([0,T]; L^2(\Omega))$, such that
  $u-g\in L^\infty((0,T); W_0^{2,p}(\Omega))$,
\begin{equation*}
    -\iint_{Q_T}u\frac{\partial \varphi}{\partial t}dxdt +\iint_{Q_T}|\Delta u|^{p-2}\Delta u \Delta\varphi dxdt =0,
\end{equation*}
for any $\varphi\in C_0^\infty(Q_T)$ and $u(x,0)=u_0(x)$ a.e. in $\Omega$.
\end{lemma}

The nonlocal-to-local convergence for equation \eqref{eq:nonhomogeneous} is as follows.
\begin{corollary}
    Let $1<p<\infty$, $u_0\in L^2(\Omega)$, and $g\in W^{2,p}(\Omega_E)$ such that $u_0-g\in W_0^{2,p}(\Omega)$.
    If $u_\varepsilon$ is the solution of the nonlocal equation \eqref{eq:nonhomogeneous} and $u$ is the weak solution of the local equation \eqref{eq:4.21}, then
    \begin{equation*}
    \lim_{\varepsilon\rightarrow 0}\sup_{t\in[0,T]}\|u_\varepsilon(x,t)-u(x,t)\|_{L^p(\Omega)}=0.
  \end{equation*}
\end{corollary}
\begin{proof}
Let $v_\varepsilon = u_\varepsilon -g$.
According to Corollary \ref{cor:1},
\begin{equation*}
  \int_0^{t}\int_{\Omega}\frac{\partial v_\varepsilon}{\partial t}\varphi dxd\tau +\int_0^{t}\int_{\Omega_{E}}|\Delta_{NL}^{J_\varepsilon}(\widetilde{v}_\varepsilon+g)|^{p-2}
  \Delta_{NL}^{J_\varepsilon}(\widetilde{v}_\varepsilon+g)\cdot
  \Delta_{NL}^{J_\varepsilon}\widetilde{\varphi} dxd\tau =0,
\end{equation*}
for any $t\in (0,T)$ and any $\varphi\in L^2(\Omega)\cap L^p(\Omega)$.
By repeating the proof of Theorem \eqref{th:4.1}, we obtain that 
    \begin{equation*}
    \lim_{\varepsilon\rightarrow 0}\sup_{t\in[0,T]}\|v_\varepsilon(x,t)-v(x,t)\|_{L^p(\Omega)}=0,
  \end{equation*}
  where $v\in L^\infty((0,T); W_0^{2,p}(\Omega))$ satisfies
\begin{equation*}
    -\iint_{Q_T}v\frac{\partial \varphi}{\partial t}dxdt +\iint_{Q_T}|\Delta (v+g)|^{p-2}\Delta (v+g) \Delta\varphi dxdt =0,
\end{equation*}
for any $\varphi\in C_0^\infty(Q_T)$. 
Since $\iint_{Q_T}g\frac{\partial \varphi}{\partial t}dxdt=0$,
we deduce that $u=v+g$ is the weak solution of equation \eqref{eq:4.21}.
This finishes the proof.
\end{proof}

\section{Numerical experiments}
In this section, we consider image inpainting as an example to illustrate the application of the nonlocal $p$-biharmonic equation \eqref{eq:nonhomogeneous} for image processing.
Let $D$ denote the image domain and $\Omega_d$ denote the inpainting region.
For any image $u$, let $u_i$ denote the value at pixel $i\in D$.
We are given partially observed data $g$ on $D\backslash \Omega_d$.
Equation \eqref{eq:nonhomogeneous} is discretized as
\begin{align}\label{eq:5.1}
\left\{
  \begin{array}{ll}
  \frac{u_i^{n+1}-u_i^n}{\tau}=-\Delta_{NLd}(|\Delta_{NLd}u_i^n|^{p-2}\Delta_{NLd}u_i^n), \quad i\in\Omega_d, n\geq 0, \\
  u_i^n=g_i,  \quad i\in D\backslash \Omega_d, n\geq 0, \\
  u_i^0=u_{0,i},\quad i\in\Omega_d.
  \end{array}
\right.
\end{align}
Here $u_0$ is an initial guess satisfying $u_0=g$ on $D\backslash\Omega_d$, and $u_i^n$ denotes the restored image at time level $n$ and pixel $i$.
The nonlocal Laplacian $\Delta_{NLd}$ of $u$ at pixel $i$ is defined by
\[
\Delta_{NLd}u_i=\sum_{j\in \mathcal{S}_i}w_{i,j}(u_j-u_i),
\]
where $w_{i,j}\geq 0$ represents the weight between pixels $i$ and $j$, and $\mathcal{S}_i\subset D$ denotes the search window centered at pixel $i$.
We adopt the classical nonlocal framework to compute the weights $w_{i,j}$.
Specifically,
\begin{equation}\label{eq:5.2}
  w_{i,j}=\frac{1}{Z_{i}}e^{-\frac{\|u_0(\mathcal{N}_{i})-u_0(\mathcal{N}_{j})\|^2_{2,a}}{2h^2}},
\end{equation}
where $Z_{i}$ is a normalization constant satisfying $\sum_{j\in \mathcal{S}_i}w_{i,j}=1$, and
$\|u_0(\mathcal{N}_i)-u_0(\mathcal{N}j)\|_{2,a}$ denotes the Gaussian-weighted Euclidean distance measuring the similarity between two patches $\mathcal{N}_i$ and $\mathcal{N}_j$ centered at pixels $i$ and $j$, respectively.
Here, $h>0$ is a scaling parameter.

For comparison, we also consider the nonlocal Laplacian (NLL) equation
\begin{align*}
\left\{
  \begin{array}{ll}
  \frac{u_i^{n+1}-u_i^n}{\tau}=\Delta_{NLd}u_i^n, \quad i\in\Omega_d, n\geq 0, \\
  u_i^n=g_i,  \quad i\in D\backslash \Omega_d, n\geq 0, \\
  u_i^0=u_{0,i},\quad i\in\Omega_d.
  \end{array}
\right.
\end{align*}
In the implementation, we use a $21\times 21$ search window and an $11\times 11$ patch size.
For each pixel, only the 10 most similar neighbors are retained for weight computation.
The parameter $h$ is set to $5$.
The initial guess $u_0$ is set to zero on $\Omega_d$.
Under this initialization, the weights $w_{i,j}$ defined in \eqref{eq:5.2} may fail to accurately reflect the nonlocal similarity.
To overcome this limitation, we first compute a solution from the zero initialization and subsequently update $u_0$ using the solution of the nonlocal equation.
This procedure is iteratively repeated 10 times.

Figure \ref{fig:1} presents the inpainting results of NLL and the nonlocal biharmonic equation (NLB) (i.e., equation \eqref{eq:5.1} with $p=2$) for the test image Barbara.
In the partially observed images (the second column of Figure \ref{fig:1}), the yellow regions indicate the inpainting domain (i.e., $\Omega_d$ in equation \eqref{eq:5.1}).
In the second row of Figure \ref{fig:1}, the mask is constructed by randomly selecting $80\%$ of the pixels from the image.
From the results we observe that the fourth-order nonlocal model NLB consistently outperforms the second-order nonlocal model NLL in terms of both visual quality and PSNR.
As discussed in \cite{wen2023nonlocal}, higher-order models exhibit improved contrast preservation, which enables them to better maintain structural information and produce finer reconstruction results.

In Figure \ref{fig:2}, we illustrate the performance of equation \eqref{eq:5.1} for different values of $p$.
It can be observed that as $p$ increases, the restored images become smoother, with fewer spike-like artifacts. This observation is consistent with the theoretical results.
Specifically, in the local counterpart \eqref{eq:4.21}, the solution space is $W^{2,p}(\Omega)$, and larger values of $p$ correspond to smoother solutions.

\begin{figure}[tbp]
  \centering
  \begin{tabular}{@{}c@{~}c@{~~~}c@{~}c@{}}
  \begin{overpic}[width=.24\textwidth]{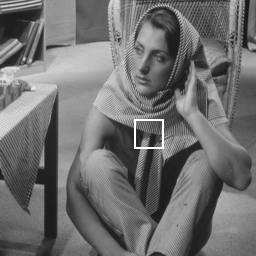}\put(61,0){\includegraphics[width=.095\textwidth]{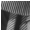}}\end{overpic}&
  \begin{overpic}[width=.24\textwidth]{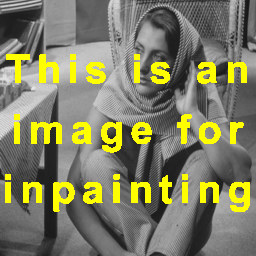}\end{overpic}&
  \begin{overpic}[width=.24\textwidth]{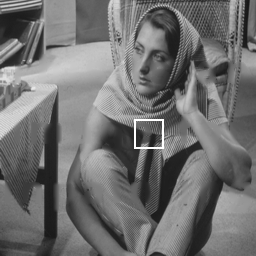}\put(61,0){\includegraphics[width=.095\textwidth]{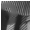}}\end{overpic}&
  \begin{overpic}[width=.24\textwidth]{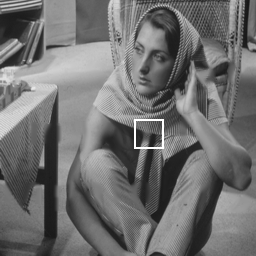}\put(61,0){\includegraphics[width=.095\textwidth]{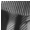}}\end{overpic}\\
  \parbox[c]{.24\textwidth}{\centering \footnotesize\emph{Barbara, $256\times256$}} &
  \parbox[c]{.24\textwidth}{\centering \footnotesize\emph{mask}} &
  \parbox[c]{.24\textwidth}{\centering \footnotesize\emph{NLL, 34.96dB}} &
  \parbox[c]{.24\textwidth}{\centering \footnotesize\emph{NLB, 37.01dB}} \\
  \begin{overpic}[width=.24\textwidth]{barbara_ori.png}\put(61,0){\includegraphics[width=.095\textwidth]{barbara_s.png}}\end{overpic}&
  \begin{overpic}[width=.24\textwidth]{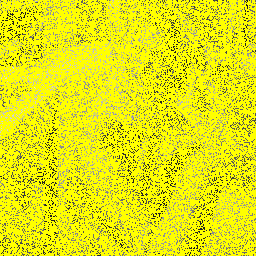}\end{overpic}&
  \begin{overpic}[width=.24\textwidth]{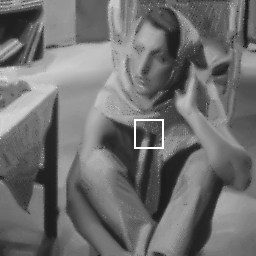}\put(61,0){\includegraphics[width=.095\textwidth]{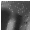}}\end{overpic}&
  \begin{overpic}[width=.24\textwidth]{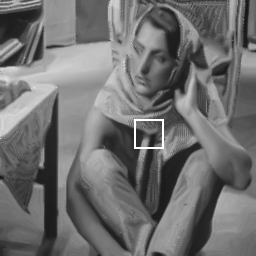}\put(61,0){\includegraphics[width=.095\textwidth]{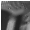}}\end{overpic}\\
  \parbox[c]{.24\textwidth}{\centering \footnotesize\emph{Barbara, $256\times256$}} &
  \parbox[c]{.24\textwidth}{\centering \footnotesize\emph{mask, $80\%$ pixels}} &
  \parbox[c]{.24\textwidth}{\centering \footnotesize\emph{NLL, 26.92dB}} &
  \parbox[c]{.24\textwidth}{\centering \footnotesize\emph{NLB, 28.39dB}} 
  \end{tabular}
\caption{Image inpainting results of the nonlocal Laplacian equation and the nonlocal biharmonic equation.}
\label{fig:1}
  \end{figure}

\begin{figure}[tbp]
  \centering
  \begin{tabular}{@{~}c@{~}c@{~}c@{~}c@{~}c@{}}
  \begin{overpic}[width=.19\textwidth]{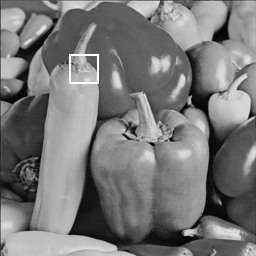}\put(48,0){\includegraphics[width=.1\textwidth]{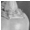}}\end{overpic}&
  \begin{overpic}[width=.19\textwidth]{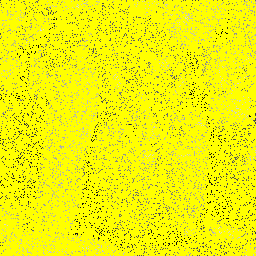}\end{overpic}&
  \begin{overpic}[width=.19\textwidth]{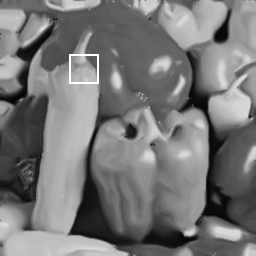}\put(48,0){\includegraphics[width=.1\textwidth]{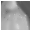}}\end{overpic}&
  \begin{overpic}[width=.19\textwidth]{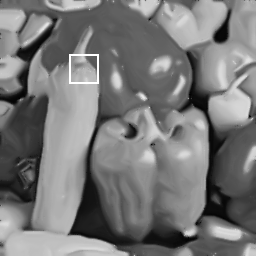}\put(48,0){\includegraphics[width=.1\textwidth]{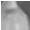}}\end{overpic}&
  \begin{overpic}[width=.19\textwidth]{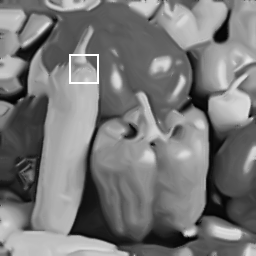}\put(48,0){\includegraphics[width=.1\textwidth]{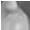}}\end{overpic}\\
  \parbox[c]{.19\textwidth}{\centering \footnotesize\emph{Peppers,$256\times256$}} &
  \parbox[c]{.19\textwidth}{\centering \footnotesize\emph{mask, $90\%$ pixels}} &
  \parbox[c]{.19\textwidth}{\centering \footnotesize\emph{$p=1.8$, 24.77dB}} &
  \parbox[c]{.19\textwidth}{\centering \footnotesize\emph{$p=2$, 25.27dB}}  &
  \parbox[c]{.19\textwidth}{\centering \footnotesize\emph{$p=2.5$, 25.63dB}}
  \end{tabular}
    \caption{Image inpainting results of the nonlocal $p$-biharmonic equation with different $p$.
  }
  \label{fig:2}
  \end{figure}

\section{Conclusion}
In this paper, we established the existence and uniqueness of solutions for a nonlocal $p$-biharmonic evolution equation with Dirichlet boundary conditions. We further showed that the solutions converged to those of the classical $p$-biharmonic equation, thereby providing a rigorous connection between the nonlocal and local models. The results were extended to non-homogeneous Dirichlet boundary conditions. Numerical experiments validated the effectiveness of the nonlocal model in image inpainting tasks.

\section*{Data available}
The data that support the findings of this study are available from the corresponding author upon reasonable request.

\section*{Conflict of interest}
The authors declare no potential conflict of interests.

\section*{Acknowledgment}
There are no funders to report for this submission.

%\section*{Acknowledgments}

%\section*{References}
\bibliographystyle{amsplain}

\bibliography{reference}

\end{document}